\def\gettexliveversion#1(#2 #3 #4#5#6#7#8)#9\relax{#4#5#6#7}
\edef\texliveversion{\expandafter\gettexliveversion\pdftexbanner\relax}
\theoremstyle:=definition,remark,plain\do{%
      \expandafter\g@addto@macro\csname th@\theoremstyle\endcsname{%
        \addtolength\thm@preskip\parskip
      }%
    }%
\pgfplotsset{compat=1.14}
\def\csname ver@etex.sty\endcsname{3000/12/31}  %
\let\etoolboxforlistloop\forlistloop %
\let\forlistloop\etoolboxforlistloop %
\let\blx@noerroretextools\@empty
  \let\reforig\ref
  \let\ref\cref
  \newcommand{\myeqref}[1]{(\reforig{#1})} %
  \definecolor{todo-color}{RGB}{23,130,132}
\def\thmt@refnamewithcomma #1#2#3,#4,#5\@nil{%
  \@xa\def\csname\thmt@envname #1utorefname\endcsname{#3}%
  \ifcsname #2refname\endcsname
    \csname #2refname\expandafter\endcsname\expandafter{\thmt@envname}{#3}{#4}%
  \fi
}
\declaretheorem[style=definition,numberwithin=section]{definition}  %
\declaretheorem[style=definition,qed=$\Diamond$,sibling=definition,name=Example,refname={Example,Examples}]{example2}
\newenvironment{example}{\par\begin{example2}\renewcommand{\qedsymbol}{\ensuremath\Diamond}}{\end{example2}}  %
\declaretheorem[style=plain,sibling=definition]{corollary}  %
\declaretheorem[style=plain,sibling=definition]{proposition}  %
\declaretheorem[style=plain,sibling=definition]{theorem}
\declaretheorem[style=plain,sibling=definition]{lemma}
\declaretheorem[style=definition,qed=$\Diamond$,sibling=definition,name=Remark,refname={Remark,Remarks}]{remark2}
\newenvironment{remark}{\par\begin{remark2}\renewcommand{\qedsymbol}{\ensuremath\Diamond}}{\end{remark2}}  %
\declaretheorem[style=remark,numbered=no,qed=$\qedsymbol$,name=Proof]{proof2}
\renewenvironment{proof}{\par\begin{proof2}}{\end{proof2}}
  \newenvironment{thm-enumerate}%
  {\enumerate[%
    ref=\protect\myown@ref{\thetheorem\,}{\labelenumi}]}%
  {\endenumerate}%
  \let\myown@ref\relax%
    \def\localref#1{{\let\myown@ref\@secondoftwo\labelcref{#1}}}%
  \newenvironment{thm-enumerateB}%
  {\enumerate[%
    ref=\protect\myown@ref{\thetheorem\,}{\labelenumiiFull}]}%
  {\endenumerate}%
  \let\myown@ref\relax%
    \def\localref#1{{\let\myown@ref\@secondoftwo\labelcref{#1}}}%
\g@addto@macro\bfseries{\boldmath}
\renewcommand{\@pnumwidth}{1.75em}%
\renewcommand{\@tocrmarg}{2.75em}%
\newcommand{\spat}{{\kern 0.05em}}                      %
\let\theenumiOld\theenumi
\let\theenumiiOld\theenumii
\renewcommand{\labelenumi}{\textnormal{(\theenumi )}}   %
\newcommand{\labelenumiiFull}{\textnormal{(\theenumiOld\theenumiiOld )}} %
\Crefname{ALC@unique}{Step}{Steps}
\algrenewcommand\algorithmicrequire{\textbf{Input:}}
\algrenewcommand\algorithmicensure{\textbf{Output:}}
\algnewcommand\algorithmicassumptions{\textbf{Assumptions:}}
\algnewcommand\Assumptions{\item[\algorithmicassumptions]}
\newcommand{\themytitle}{Truncated moment problems on positive-dimensional algebraic varieties}
\newcommand{\themyauthor}{Markus Wageringel}
\title{\themytitle}
\author{\themyauthor{}\footnote{Osnabrück University, Institute of Mathematics, \texttt{markus.wageringel@uos.de}}}
\date{}
\DeclarePairedDelimiter\prn{\lparen}{\rparen}
\newcommand{\lr}[1]{\prn*{#1}}
\NewDocumentCommand\mylist{O{:} >{\SplitList{,}}m}
{%
  \def\itemdelim{\def\itemdelim{#1}}%
  \ProcessList{#2}{\mylistitem}%
}
\newcommand\mylistitem[1]{\itemdelim #1}
\NewDocumentCommand\tsubstack{O{,\spat} >{\SplitList{\\}}m}
{%
  \def\itemdelim{\def\itemdelim{#1}}%
  \ProcessList{#2}{\tsubstackitem}%
}
\newcommand\tsubstackitem[1]{\itemdelim #1}
\newcommand{\coloneqq}{\mathrel{{\mathop:}{\mkern-0.5mu}{=}}}%
\DeclareMathOperator{\image}{im}
\DeclareMathOperator{\kernel}{ker}
\DeclareMathOperator{\Hom}{Hom}
\newcommand{\dirac}[1]{\delta_{#1}}
\newcommand{\dual}{^*}
\newcommand{\withoutzero}{^*}
\newcommand{\linterval}[1]{\left[#1\right)}
\renewcommand{\d}{\mathrm{d}}
\newcommand{\I}{\mathrm{i}}
\newcommand{\homog}[1]{%
  \let\olddots\dots{}%
  \let\oldldots\ldots{}%
  \let\dots\cdots{}%
  \let\ldots\cdots{}%
  \bracketed*{\mylist[:]{#1}} %
  \let\dots\olddots{}%
  \let\ldots\oldldots{}}
\newcommand{\compose}{\mathbin{\circ}}
\newcommand{\restrict}[1]{|_{#1}}
\newcommand{\K}{\mathbbm{k}}
\newcommand{\contin}[1]{C^{#1}}
\newcommand{\contincompact}[1]{\contin{#1}_{\textnormal{c}}}
\DeclarePairedDelimiter\bracketed{\lbrack}{\rbrack}
\DeclarePairedDelimiter\angled{\langle}{\rangle}
\DeclarePairedDelimiter\braced{\lbrace}{\rbrace}
\DeclarePairedDelimiter\prnabs{|}{|}
\newcommand{\T}{\mathbb{T}}  %
\def\mf{\mathfrak}
\newcommand{\totaldeg}[1]{\prnabs*{#1}}
\newcommand{\maxdeg}[1]{\totaldeg{#1}_{\infty}}
\newcommand{\idealspan}[2][]{\angled*{#2}_{#1}}
\newcommand{\transp}{^{\top}}  %
\newcommand{\conj}[1]{\overline{#1}}
\newcommand{\scalarprod}[3][]{\angled*{#2,#3}\ifstrempty{#1}{}{_{#1}}}
\newcommand{\abs}[1]{\prnabs*{#1}}
\DeclareMathOperator{\diag}{diag}%
\newcommand{\maxideal}[1]{\mathfrak{m}_{#1}}
\DeclareMathOperator{\ev}{ev}
\newcommand{\evd}[2]{\proj{#1, ≤#2}}
\newcommand{\kdual}[1]{\Hom_\K\lr{#1,\K}}
\newcommand{\semikdual}[1]{\Hom_\K^{\textnormal{semi}}\lr{#1,\K}}
\newcommand{\Transp}[1]{#1\transp}
\DeclareMathOperator{\VV}{V}
\newcommand{\V}[1]{\VV\lr{#1}}
\DeclareMathOperator{\nonVV}{D}
\newcommand{\nonV}[1]{\nonVV\lr{#1}}%
\newcommand{\submodulequotient}[2]{#1/(#2\cap#1)}
\newcommand*\bigcdot{\mathpalette\bigcdot@{.5}}
\newcommand*\bigcdot@[2]{\mathbin{\vcenter{\hbox{\scalebox{#2}{$\m@th#1\bullet$}}}}}
\newcommand{\ie}{i.\,e.\ }
\newcommand{\eg}{e.\,g.\ }
\DeclareMathOperator{\rk}{rk}  %
\DeclareMathOperator{\vanishingIdeal}{I}
\newcommand{\Id}[1]{\vanishingIdeal\lr{#1}}
\newcommand{\Z}[1]{\V{#1}}%
\newcommand{\resid}[1]{\widebar{#1}}
\newcommand{\closure}[1]{\widebar{#1}}
\newcommand{\zariski}[1]{\closure{#1}} %
\DeclareMathOperator{\supp}{supp}
\newcommand{\LambdaFunction}[2][0]{%
  \let\tmp\relax
  \newcommand\tmp[#1]{#2}\tmp}
\newcommand{\range}[4][,]{\LambdaFunction[1]{#2}{#3}#1%
  \ifthenelse{\equal{#1}{,}}{…}{\cdots}%
  #1\LambdaFunction[1]{#2}{#4}}
\newcommand{\proj}[1]{π_{#1}}
\newcommand{\sform}[2][]{\angled*{#2}_{#1}}%
\newcommand{\blank}{\mathord{-}}%
\newcommand{\invol}[2][]{#2^{\circ_{#1}}}
\newcommand{\Invol}[2][]{\lr{#2}^{\circ_{#1}}}
\newcommand{\fourier}[1]{\hat{#1}}
\newcommand{\weakstar}{weak${}\dual{\kern -0.05em}$}
\newcommand\blfootnote[1]{%
  \begingroup%
  \renewcommand\thefootnote{}\footnote{#1}%
  \addtocounter{footnote}{-1}%
  \endgroup%
}
\begin{document}
\maketitle{}

\begin{abstract}
\small
\noindent
This manuscript transfers the main aspects of Prony's method
from finitely\-/supported measures to the classes of
signed or non-negative measures supported on algebraic varieties of any dimension.
In particular, we show that the Zariski closure of the support of these measures
is determined by finitely many moments
and can be computed from the kernel of certain moment matrices.
\blfootnote{\textup{2010} \textit{Mathematics Subject Classification}: 13P25, 62F99, 65T40}%
\blfootnote{\textit{Key words}: truncated moment problem, moment matrix, Prony's method}%
\end{abstract}

\addsec{Introduction}%

The truncated moment problem for finitely-supported measures
asks for parameter recovery from a given finite set of moments.
This problem can be addressed by a multivariate form of Prony's method \cite{kunis2016:pronymultiv,vdohe2017,sauer2017:pronymultiv,mourrain17:polyexp},
a widely-used tool in signal processing that is algebraic at heart.
It recovers the finitely many support points of such a measure
as the zero set of a family of polynomials,
so it is natural to view the support as a zero-dimensional algebraic variety.

In this manuscript,
we switch from finitely-supported measures
to the much more general class of
measures that are supported on algebraic varieties of any dimension
and we analyze which features of Prony's method can be transferred to this setting.
By considering the kernels of certain moment matrices,
we show that
it is possible to recover the vanishing ideal of the support of a measure.
In other words, given sufficiently many moments,
one obtains the Zariski closure of the support, by algebraic means.

\minisec{Contributions}

    The Vandermonde decomposition of the moment matrix
    of a finitely-supported signed measure
    is an essential ingredient of Prony's method.
    \ref{thm:hankelopfactorization}
    forms an analog of this decomposition
    that is suitable also for measures
    supported on positive-dimensional varieties.

    For any compactly-supported signed measure,
    \ref{thm:idealequalkernelextended} establishes a relationship
    between moment matrices and the vanishing ideal of the support.
    It shows that the Zariski closure of the support
    can be computed from finitely many moments.
    The \lcnamecref{thm:idealequalkernelextended}
    can be viewed as an extension of \ref{thm:idealequalkernel},
    which makes a similar statement for non-negative measures
    and has been considered by
    \cite{laurentrostalski2012,lasserre2021:empiricalmomentschristoffel},
    in the real affine (non-trigonometric) setting,
    and is also related to
    \cite{ongie15:piecewisesmooth,ongie2016:piecewiseconstant,vetterli2016}
    which have investigated the case of plane curves.

    Additionally, \ref{cor:idealequalkernelpolynomial} gives an extension
    for signed measures that are a product of a polynomial and a non-negative measure.
    This allows us to formulate a generalization of Prony's method
    in \ref{rem:pronyposdim}
    for particular measures that are not necessarily finitely-supported,
    but are supported on an algebraic variety of any dimension.
    Moreover, a variant for complex linear combinations of non-negative measures
    is proved in \ref{thm:idealequalkernel:mixture}.

\minisec{Outline}

After briefly summarizing the main ideas of the multivariate Prony method in \ref{sec:prony},
we start in \ref{sec:sesquilinearity} with a short treatment
of sesquilinear forms that can be associated to a functional $σ$.
The concept of sesquilinearity is useful in this context
as it allows us to treat both cases, that of measures in affine space
and on the complex torus, simultaneously.
We then continue in \ref{sec:factorization}
by transferring to the more general setting
the Vandermonde factorization of \ref{lem:vandermondefactorization:1}
that is such an essential ingredient for Prony's method.
\ref{sec:support} addresses our leading question,
that of recovering the algebraic variety
the measure $μ$ is supported on.
This can be achieved by using finitely many moments,
both for non-negative as well as compactly-supported signed measures,
in affine space and on the complex torus.
In case of non-negative measures,
the moment matrices are positive-semidefinite
which allows for stronger statements;
we illustrate this difference in some examples.

\minisec{Terminology}

The symbol~$\K$ always denotes a field;
in some sections we explicitly assume that it is of characteristic~$0$.
The (algebraic) dual space of a $\K$-vector space $V$
is written as $\kdual{V}$.
Similarly, $\semikdual{V}$ denotes the set of semilinear maps from $V$ to $\K$
(cf.~\ref{sec:sesquilinearity}).

For an introduction to algebraic geometry,
see \cite{Cox:2015}.
By \emph{algebraic variety}, we refer to
the vanishing set of a set of polynomials, also known as algebraic set,
that is, we do not require irreducibility.
A variety generated by an ideal $\mf{a}$ is denoted by $\V{\mf{a}}$.
The vanishing ideal of a set $X\subseteq\K^n$ is denoted by $\Id{X}$.
We use multi-index notation for monomials.
Thus, when working in the polynomial ring $\K[x_1,…,x_n]$,
the monomials are denoted by $x^α = x_1^{α_1}\cdots x_n^{α_n}$, $α∈ℕ^n$.
The \emph{(total) degree} of a polynomial $p = \sum_{α∈ℕ^n} p_α x^α$ with coefficients $p_α∈\K$
is given by $\deg(p) = \max\{\totaldeg{α}\mid α∈ℕ^n,\,p_α≠0\}$,
where $\totaldeg{α} \coloneqq α_1 + \cdots + α_n$.
Similarly, we define the \emph{max-degree} of a Laurent polynomial $q = \sum_{α∈ℤ^n} q_α x^α$, $q_α∈\K$,
as $\max\{\maxdeg{α}\mid α∈ℤ^n,\,q_α≠0\}$,
where $\maxdeg{α} \coloneqq \max\{\abs{α_1},…,\abs{α_n}\}$.
The same definition applies when $q$ is a polynomial.
Though, note that the max-degree does not define a grading of the polynomial ring,
but gives rise to a filtration (cf.~\ref{ex:filtrations}).

Given an ideal $\mf{a} \subseteq \K[x_1,…,x_n]$,
the Krull-dimension of the quotient ring $\K[x_1,…,x_n] / \mf{a}$,
\ie the supremum of the heights of all prime ideals,
is the same as the dimension of the variety $\V{\mf{a}} \subseteq \K^n$
(cf.~\cite[Theorem~9.3.8]{Cox:2015}).
By abuse of language, we also refer to this as the dimension of the ideal $\mf{a}$.
The residue class of a polynomial $p ∈ \K[x_1,…,x_n]$ modulo an ideal $\mf{a}$
is denoted by $\resid{p} = p + \mf{a}$.
By $\idealspan{\blank}$,
we denote the ideal spanned by a family of ring elements.
We write
$\maxideal{ξ} \coloneqq \idealspan{x - ξ} = \idealspan{x_1-ξ_1,…,x_n-ξ_n}$
for the maximal ideal associated to a point $ξ∈\K^n$.
Furthermore, the map
$\ev_{ξ}\colon \K[x_1,…,x_n]\to \K$, $p \mapsto p(ξ)$,
denotes the \emph{evaluation homomorphism} associated to a point $ξ∈\K^n$.
It can naturally be viewed as a ring homomorphism to
the quotient ring corresponding to the ideal $\maxideal{ξ}$.

Unless otherwise noted, the term \emph{measure}
refers to non-negative Borel measures.
Occasionally, we also work with signed measures.
Over the complex numbers,
the term \emph{signed measure}
stands for complex(-signed) measure.
Every (finite) non-negative measure is, in particular, a signed measure.
For details, we refer to \cite{schwartz1973,rudin1987}.

\section{Prony's method}\label{sec:prony}\label{sec:multivarprony}

The following is a multivariate generalization of \emph{Prony's method}
that, in its univariate form, goes back to \cite{prony1795}.
We wish to transfer its essence to the more general setting of algebraic varieties of any dimension.
The variant we cite here is useful for this,
but there are many alternative formulations that accentuate different points of view.
For instance, it has been considered in terms of exponential sums with a focus on signal processing
in \cite{kunis2016:pronymultiv,vdohe2017,sauer2017:pronymultiv,mourrain17:polyexp}.
Another variation of Prony's method
is \emph{Sylvester's algorithm} \cite{sylvester1886}.
It is also related to \emph{Macaulay inverse systems} (see \eg{}\cite[Chapter~21.2]{eisenbud})
and \emph{apolarity theory}
(cf.~\cite[Lemma~1.15, algorithm in Chapter~5.4]{iarrobinokanev99},
\cite[Chapter~19]{schmuedgen2017}),
which put more emphasis on algebraic and geometric aspects.

\begin{proposition}[{\cite{prony1795}, \cite{kunis2016:pronymultiv}, \cite[Remark~2.8, Corollary~2.19]{vdohe2017}}]
  \label{thm:pronyohe:0}
  \label{lem:vandermondefactorization}\crefalias{enumi}{proposition}\crefalias{enumii}{proposition}
  Let $\K$ be a field and let $R = \K[x_1,…,x_n]$ be the polynomial ring in $n$ variables.
  Let $σ = \sum_{j=1}^r λ_j \ev_{ξ_j}$ for $λ_j ∈ \K$
  and $ξ_j∈\K^n$, $1≤j≤r$.
  Let $d,d' ∈ ℕ$ and define $H_{d',d} \coloneqq \lr{σ(x^{α+β})}_{\totaldeg{α}≤d', \totaldeg{β}≤d}$.
  Then the following properties hold:
  \begin{thm-enumerate}
    \item\label{lem:vandermondefactorization:1}
      $H_{d',d} = V_{≤d'}\transp Λ V_{≤d}$,
      where $Λ \coloneqq \diag\lr{λ_1,…,λ_r}$ and
      $V_{≤d} \coloneqq \lr{ξ_j^α}_{1≤j≤r,\totaldeg{α}≤d}$.
    \item\label{lem:vandermondefactorization:2ab}
      If $λ_1,…,λ_r≠0$ and $\ev_{≤d'}\colon R_{≤d'} \to \K^r$, $x^α\mapsto (ξ_j^α)_{1≤j≤r}$, is surjective,
      then:
      \begin{thm-enumerateB}
        \item\label{lem:vandermondefactorization:2}
          $\kernel H_{d',d} = \kernel V_{≤d} = \Id{\{ξ_1,…,ξ_r\}} \cap R_{≤d}$.
        \item\label{thm:pronyohe}
          $\Z{\kernel H_{d',d}} = \{ξ_1,…,ξ_r\}$ if $d-1≥d'$.
      \end{thm-enumerateB}
  \end{thm-enumerate}
\end{proposition}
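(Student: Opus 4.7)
The plan is to handle the three parts in order, each building on the previous.

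For \localref{lem:vandermondefactorization:1}, I would just compute entrywise: the $(α,β)$\=/entry of $H_{d',d}$ equals $\sum_{j=1}^r λ_j ξ_j^{α+β} = \sum_{j=1}^r ξ_j^α λ_j ξ_j^β$, which is exactly the $(α,β)$-entry of $V_{≤d'}\transp Λ V_{≤d}$. No obstacle here.

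For \localref{lem:vandermondefactorization:2}, I would first identify $R_{≤d}$ with $\K^{N_d}$ via the monomial basis, so that $V_{≤d}v$ reads off as $(p_v(ξ_j))_j$ where $p_v = \sum v_α x^α$. Thus $\kernel V_{≤d} = \Id{\{ξ_1,…,ξ_r\}}\cap R_{≤d}$ is a direct translation. The inclusion $\kernel V_{≤d} \subseteq \kernel H_{d',d}$ is immediate from the factorization. For the reverse, suppose $H_{d',d}v = V_{≤d'}\transp Λ V_{≤d}v = 0$; since $Λ$ is invertible (all $λ_j≠0$) it suffices to show $V_{≤d'}\transp$ is injective. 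But the matrix of $\ev_{≤d'}$ in the monomial basis is $V_{≤d'}$ itself, so surjectivity of $\ev_{≤d'}$ means $V_{≤d'}$ has full row rank $r$, equivalently $V_{≤d'}\transp$ is injective. This gives $V_{≤d}v = 0$.

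The interesting part is \localref{thm:pronyohe}. The inclusion $\{ξ_1,…,ξ_r\}\subseteq\Z{\kernel H_{d',d}}$ is immediate from the previous step. For the reverse, I would pick $ξ\in\K^n\setminus\{ξ_1,…,ξ_r\}$ and exhibit $q\in I(\{ξ_j\})\cap R_{≤d}$ with $q(ξ)≠0$. Phrased dually, I want to show that the row vector $(ξ^α)_{\totaldeg{α}≤d}$ is not a linear combination of the rows of $V_{≤d}$. Suppose for contradiction that $\ev_ξ = \sum_j c_j \ev_{ξ_j}$ on $R_{≤d}$. Use the surjectivity of $\ev_{≤d'}$ to pick Lagrange-type interpolators $p_i\in R_{≤d'}$ with $p_i(ξ_k)=\kronecker{ik}{}$, and test the identity against $p_i\ell$ for arbitrary linear $\ell$; since $d\geq d'+1$, such products lie in $R_{≤d}$. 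This yields $p_i(ξ)\ell(ξ) = c_i\ell(ξ_i)$ for every linear $\ell$. Setting $\ell=1$ gives $p_i(ξ)=c_i$, and setting $\ell=x_k$ forces $c_i(ξ_k-ξ_{i,k})=0$ for every coordinate $k$; since $ξ\neq ξ_i$, we conclude $c_i=0$ for all $i$. But then $\ev_ξ=0$ on $R_{≤d}$, contradicting $\ev_ξ(1)=1$.

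The main obstacle is the last part: one has to combine the surjectivity of $\ev_{≤d'}$ (which produces enough interpolators) with the degree slack $d\geq d'+1$ (which lets us multiply these interpolators by a linear form and remain inside $R_{≤d}$). Once the dual formulation is set up, the rest is a short linear-algebraic contradiction.
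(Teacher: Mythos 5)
Your proposal is correct, and parts \localref{lem:vandermondefactorization:1} and \localref{lem:vandermondefactorization:2} follow exactly the paper's route: direct entrywise computation for the factorization, then injectivity of $V_{≤d'}\transp Λ$ (from surjectivity of $\ev_{≤d'}$ and $λ_j \neq 0$) to identify $\kernel H_{d',d}$ with $\kernel V_{≤d} = \Id{\{ξ_1,\dots,ξ_r\}} \cap R_{≤d}$.

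The one place you do more than the paper is part \localref{thm:pronyohe}. The paper simply cites \cite[Theorem~2.15]{vdohe2017} for the implication that surjectivity of $\ev_{\leq d-1}$ forces $\V{\kernel V_{\leq d}} = \{ξ_1,\dots,ξ_r\}$, without reproducing the argument. You supply a self-contained proof: dualize so that membership $ξ \in \Z{\kernel V_{\leq d}}$ is equivalent to $\ev_ξ$ lying in the span of $\{\ev_{ξ_j}\}$ on $R_{\leq d}$, then use Lagrange interpolators $p_i \in R_{\leq d'}$ (which exist by surjectivity) multiplied by an arbitrary linear form $\ell$, so that $p_i\ell \in R_{\leq d}$ thanks to the degree slack $d \geq d'+1$. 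Testing with $\ell = 1$ and $\ell = x_k$ then forces each coefficient $c_i$ to vanish, and the contradiction with $\ev_ξ(1) = 1$ closes the argument. This is precisely the standard separation-of-points argument behind the cited result, so the underlying mathematics agrees with the paper; you have merely filled in the details the paper delegates to a reference. Both routes are sound; yours has the advantage of being self-contained, while the paper's is shorter by outsourcing the geometric step.
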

\begin{proof}
  The factorization $H_{d',d} = V_{≤d'}\transp Λ V_{≤d}$ follows by direct computation.
  Furthermore, if $λ_1,…,λ_r≠0$
  and $\ev_{≤d'}$ is surjective,
  then $V_{≤d'}\transp Λ$ represents an injective map,
  so the kernels of $V_{≤d}$ and $H_{d',d}$ are the same
  and agree with the truncated vanishing ideal $\Id{\{ξ_1,…,ξ_r\}} \cap R_{≤d}$,
  which shows \localref{lem:vandermondefactorization:2}.
  Then part~\localref{thm:pronyohe} follows from the observation
  that the surjectivity of $\ev_{≤d-1}$ implies
  $\V{\kernel V_{≤d}} = \{ξ_1,…,ξ_r\}$;
  see \cite[Theorem~2.15]{vdohe2017}.
\end{proof}
\par
Note that, if the points $ξ_1,…,ξ_r$ are not distinct,
then the map $\ev_{≤d'}\colon R_{≤d'} \to \K^r$
in \localref{lem:vandermondefactorization:2} can never be surjective,
so the surjectivity assumption implies in particular that the points are distinct.
Further, note that the matrix $H_{d',d}$ in \ref{thm:pronyohe:0}
represents the $\K$-linear map
into the dual space of the vector space $R_{≤d'}$ given by
\[
  R_{≤d} \longrightarrow \kdual{R_{≤d'}},\qquad p \longmapsto (q \mapsto σ(p q)),
\]
as well as the $\K$-bilinear mapping
\[
  R_{≤d'} × R_{≤d} \longrightarrow \K,\qquad (q, p) \longmapsto σ(p q).
\]

A map of the form $σ = \sum_{j=1}^r λ_j \ev_{ξ_j}$,
where $\ev_{ξ_j}$ denotes the evaluation homomorphism associated to the point $ξ_j$,
can also be viewed as \emph{exponential sum}.
It satisfies $σ(x^α) = \sum_{j=1}^r λ_j ξ_j^α$ for all $α∈ℕ^n$,
so can be interpreted as a map $ℕ^n \to \K$,
by composing it with $α \mapsto x^α$.

Also note that $σ$ is the \emph{moment functional} of the finitely-supported measure
$μ\coloneqq \sum_{j=1}^r λ_j \dirac{ξ_j}$,
where $\dirac{ξ_j}$ denotes the Dirac measure supported at the point $ξ_j ∈ \K^n$ for $1≤j≤r$.
For this interpretation, we usually assume that $\K$ is $ℝ$ or $ℂ$.
If $\K=ℂ$ and the weights $λ_1,…,λ_r∈ℂ$ are complex,
then $μ$ is a \emph{signed} (complex) measure,
which is explicitly allowed in this setting.
The signed measure $μ$ satisfies
$\int_{\K^n} x^α \d μ(x) = \sum_{j=1}^r λ_j ξ_j^α = σ(x^α)$,
so $σ(x^α)$ agrees with the \emph{$α$-th moment} of $μ$.
On top of that, the moments $σ(x^α)$ uniquely determine the map $σ$.

From this point of view,
the statement of \ref{thm:pronyohe} is that
the support of the finitely-supported signed measure $μ$
is already determined by \emph{finitely many} of its moments,
namely the ones that are required to construct the matrix $H_{d-1,d}$.
In fact, in this case, the weights $λ_1,…,λ_r$ can be recovered as well,
by subsequently solving a linear system of equations (cf.~\cite[Algorithm~2.1]{vdohe2017}),
so the measure $μ$ is fully determined by these moments.
The condition that $\ev_{≤d-1}$ is surjective holds if $d$ is sufficiently large,
a trivial bound being $d≥r$,
as can be seen by constructing Lagrange polynomials of degree $r-1$ for the points $ξ_1,…,ξ_r$; cf. \cite[Corollary~2.20]{vdohe2017}.
The ideal
\[
  \bigcap_{j=1}^r \idealspan{x - ξ_j}
  = \prod_{j=1}^r \idealspan{x - ξ_j}
  = \prod_{j=1}^r \idealspan{x_1 - ξ_{j1},…, x_n - ξ_{j n}}
\]
is clearly generated by polynomials of degree at most $r$,
but in the multivariate setting with $n≥2$,
unless the points $ξ_1,…,ξ_r$ are contained in a one-dimensional subspace of $\K^n$,
this bound can be much larger than necessary.
A more practical sufficient criterion for the evaluation map $\ev_{≤d-1}$ being surjective
is obtained by checking the rank of the matrix $H_{d-1,d}$.
As this rank is at most $r$,
it follows from the Vandermonde factorization in \ref{lem:vandermondefactorization:1}
that $\ev_{≤d-1}$ is surjective if and only if $\rk H_{d-1,d} = r$.

\begin{remark}
  A variation of Prony's method works with Toeplitz matrices
  of the form
  \[
    \lr{\sum_{j=1}^r λ_j ξ_j^{-α+β}}_{α,β∈ℕ^n,\,\maxdeg{α}≤d',\maxdeg{β}≤d}
  \]
  instead of Hankel matrices,
  where the moments are usually bounded in max-degree.
  For this to be defined,
  the points $ξ_1,…,ξ_r$ must have non-zero coordinates,
  so they are contained in the algebraic torus $\lr{ℂ\withoutzero}^n$.
  This is especially common when working in a trigonometric setting,
  with points on the \emph{complex torus}
  \[
    \T^n \coloneqq \{z ∈ ℂ^n \mid \abs{z_1}=\cdots =\abs{z_n} = 1\}.
  \]
  Moreover, one can work with much more general filtrations of the polynomial ring;
  see the statements in \cite[Chapter~2]{vdohe2017}.
  See also \cite{ohe2020:pronystructures}
  for an approach relating Toeplitz and Hankel matrices in this context.
\end{remark}

\section{Sesquilinearity and filtrations}\label{sec:sesquilinearity}

In this section, we set up a framework that allows us
to treat in a unified way the two different settings of
moment problems we are primarily interested in,
namely moment problems on affine space and on the torus.
See \cite[Chapter~2]{schmuedgen2017} for a similar approach
to these concepts.

\begin{definition}
  Let $R$ be a ring with a map
  $\invol{\blank}\colon R\to R$ satisfying
  \begin{align}
    \Invol{x + y} = \invol{x} + \invol{y}, &&
    \Invol{x y} = \invol{y} \invol{x}, &&
    \invol{1} = 1, &&
    \Invol{\invol{x}} = x
  \end{align}
  for all $x,y∈R$.
  Then the map $\invol{\blank}$ is called \emph{involution}
  and $R$ is an \emph{involutive ring} (also called \emph{${}^*$-ring}).
  An involutive ring $A$ with involution $\invol[A]{\blank}$
  that is also an (associative) algebra over a commutative involutive ring $R$
  is an \emph{involutive algebra} (also called \emph{${}^*$-algebra}),
  if the involution satisfies
  $\Invol[A]{r a} = \invol{r} \invol[A]{a}$
  for all $r∈R$ and $a∈A$.
  As this property means that there is no ambiguity,
  we denote the involution on $A$
  by $\invol{\blank}$ as well.
  A map $f\colon A \to A$ is \emph{$\invol{}$-semilinear} if
  $f(a+b) = f(a) + f(b)$
  and $f(r a) = \invol{r} f(a)$ holds for all $r∈R$ and $a,b∈A$.
\end{definition}

Common examples of involutive rings include the field of complex numbers $ℂ$
with complex conjugation
as well as square complex matrices with conjugate transposition as involution.
Another important example for our discussion
is given in \ref{ex:involution:conjugation} below.
Also note that any commutative ring (algebra) is an involutive ring (algebra)
with respect to the trivial involution
which leaves every element unchanged.

\begin{definition}\label{def:filtration}
  Let $\K$ be a field
  and $A$ an (associative) algebra over $\K$.
  If $F_d \subseteq A$, $d∈ℕ$, is a family of
  $\K$-vector subspaces satisfying
  \begin{itemize}
    \begin{minipage}{0.525\linewidth}
      \item $F_d \subseteq F_e$ for $d,e∈ℕ$ with $d≤e$,
      \item $A = \bigcup_{d∈ℕ} F_d$,
    \end{minipage}
    \begin{minipage}{0.4\linewidth}
      \item $1 ∈ F_0$,
      \item $F_d \cdot F_e \subseteq F_{d+e}$ for $d,e∈ℕ$,
    \end{minipage}
  \end{itemize}
  then $A$ is a \emph{filtered algebra} over $\K$
  and the family $\{F_d\}_{d∈ℕ}$ is called \emph{filtration} of $A$.
  In particular, the filtrations we consider are exhaustive.
  For simplicity of notation, we often denote
  the filtered components of the filtration by
  $A_{≤d} \coloneqq F_d$.
\end{definition}

\begin{example}\label{ex:filtrations}
  Let $\K$ be a field and $R = \K[x_1,…,x_n]$
  be the polynomial ring in $n$~variables over $\K$, for some $n∈ℕ$.
  Then the total degree of polynomials gives rise to a filtration of $R$ where
  \[\label{eq:totaldegfiltration}
    R_{≤d} = \{p ∈ R \mid \deg(p) ≤ d\}
  \]
  for $d∈ℕ$.
  Similarly, we can define a filtration $\{F_d\}_{d∈ℕ}$, on $R$
  in terms of max-degree by
  \[\label{eq:maxdegfiltration}
    F_d = \bigoplus_{\tsubstack{α∈ℕ^n\\\maxdeg{α}≤d}} \K x^α.
  \]
  Note that all the filtered components of these two filtrations
  happen to be $\K$-vector spaces of finite dimension,
  which is a useful property when it comes to computations.

  Now let $\mf{a}\subseteq R$ be an ideal with $1\notin\mf{a}$
  and define $S = R / \mf{a}$.
  If $\{F_d\}_{d∈ℕ}$, is any filtration of $R$,
  then
  $G_d \coloneqq F_d / \lr{\mf{a} \cap F_d}$
  defines a filtration of the quotient ring $S$.
  For this, observe that $G_d$ can be embedded in $G_{d+1}$
  via the injective map
  $p + \mf{a} \cap F_d \mapsto p + \mf{a} \cap F_{d+1}$,
  for all $p∈F_d$, $d∈ℕ$.
\end{example}

For the remainder of this \lcnamecref{sec:sesquilinearity},
we assume, for simplicity, that $\K$ is a field of characteristic $0$
together with an involution $\invol{\blank}$
that endows $\K$ with the structure of an involutive ring.
Moreover, we denote by $R = \K[x_1,…,x_n]$
the polynomial ring in finitely many variables
and fix a filtration $\{R_{≤d}\}_{d∈ℕ}$
that turns $R$ into a filtered algebra over $\K$
and has the property that $R_{≤d}$ is a finite-dimensional $\K$-vector space for every $d∈ℕ$.
Additionally, we assume that $R\subseteq L$
is a $\K$-subalgebra of an involutive commutative algebra $L$ over $\K$.
The involution on $L$ is denoted by $\invol{\blank}$ as well.
Typical examples are the following:
\par
\begin{example}\label{ex:involution:trivial}
      If $\K$ is any field, let $L = R$
      and define the involutions on $\K$ and $L$ to act trivially.
      The filtration $\{R_{≤d}\}_{d∈ℕ}$ on $R$ is defined by total degree as
      in \myeqref{eq:totaldegfiltration}.
      Of particular interest is the case when
      $\K$ is the field of real numbers $ℝ$ (or a subfield thereof).
\end{example}

\begin{example}\label{ex:involution:conjugation}
      If $\K$ is any field with an involution $\invol{\blank}$,
      let $L = \K[x_1^{±1},…,x_n^{±1}]$ be the ring of Laurent polynomials
      and define the involution on $L$ by
      \[
        \Invol{\sum_{α} p_α x^α} \coloneqq \sum_{α} \invol{p_α} x^{-α},
      \]
      where $p_α∈\K$, $α∈ℤ^n$,
      which turns $L$ into an involutive algebra.
      For the filtration on $R$,
      in this situation we usually pick the one that is induced by max-degree
      as in \myeqref{eq:maxdegfiltration},
      since $L$ is the coordinate ring of the algebraic torus,
      and denote it by $\{R_{≤d}\}_{d∈ℕ}$ again.

      Of particular interest is the case $\K = ℂ$ of complex numbers
      with complex conjugation as involution.
      In this case, an observation that can be significant in some applications is the following:
      If we restrict a Laurent polynomial $p∈L$ to the complex torus $\T^n$,
      then the involution $\invol{p}$ is the complex conjugate of $p$ as a function on $\T^n$,
      so we have
      \[
        \invol{p}(ξ) = \conj{p(ξ)}
      \]
      for all $ξ∈\T^n$, since $ξ^{-α} = \conj{ξ}^α$ for all $α∈ℤ^n$.
      In particular, the Laurent polynomial
      $p$ is a real function on $\T^n$
      if and only if $\invol{p} = p$,
      \ie $p_α = \conj{p_{-α}}$ for all $α$.
      Furthermore, note that,
      if $\mf{a} \subseteq L$ is a vanishing ideal of a set contained in $\T^n$,
      then it follows that $\invol{\mf{a}} = \mf{a}$.
\end{example}

\begin{definition}\label{def:sesquilinearform}
  Let $σ\colon L \to \K$ be a $\K$-linear map.
  Then we define the $\K$-sesquilinear form
  \[
    \sform[σ]{\blank,\blank}\colon L × L \longrightarrow \K,\quad (q, p) \longmapsto σ(\invol{q} p),
  \]
  which is $\invol{}$-semilinear in the first and linear in the second argument.
  Defining sesquilinear forms to be semilinear in the first
  rather than in the second argument is an arbitrary choice.
  We choose this convention as it simplifies our notation later on.
  By restriction, we can also view this as a sesquilinear form on $R$
  as well as on the finite-dimensional vector spaces $R_{≤d}$, $d∈ℕ$.
  Note that this is a symmetric bilinear form if the involution is trivial.

  A form $\sform{\blank,\blank}$ on a $\K$-vector space $U$
  is \emph{Hermitian} if
  $\sform{q,p} = \invol{\sform{p,q}}$ for all $p,q∈U$.
  If the involution is trivial, as in \ref{ex:involution:trivial},
  then this always holds for $\sform[σ]{\blank,\blank}$,
  as the form is symmetric in that case.
  When $\K$ is (a subfield of) the complex numbers $ℂ$,
  then a Hermitian form $\sform[σ]{\blank,\blank}$ on $U$
  is \emph{positive-semidefinite} if, additionally,
  $\sform[σ]{p,p} ≥ 0$ for all $p∈U$.
  Note that this never holds if
  $\K \nsubseteq ℝ$ and
  the involution is linear,
  rather than $\invol{}$-semilinear,
  unless the form is trivial.
\end{definition}

\begin{remark}\label{rem:monomialbasis:hankel:toeplitz}
  Assume that a family of monomials $\{x^α\}_{α∈J} \subseteq R_{≤d}$
  for a suitable index set $J\subseteq ℕ^n$
  forms a basis of the finite-dimensional vector space $R_{≤d}$
  and that the involution $\invol{\blank}$ is trivial.
  Then the Gramian matrix of $\sform[σ]{\blank,\blank}$
  with respect to this basis is of the form
  \[
    \lr{\sform[σ]{x^α,x^β}}_{α,β∈J}
    = \lr{σ\lr{x^{α+β}}}_{α,β∈J},
  \]
  which is a (generalized) Hankel matrix.

  Likewise, if $\{x^α\}_{α∈J} \subseteq R_{≤d}$ is a basis of $R_{≤d}$,
  but $L$ is the ring of Laurent polynomials
  with involution $\invol{\blank}\colon L\to L$
  defined as in \ref{ex:involution:conjugation},
  then the Gramian matrix with respect to this basis
  is of the form
  \[
    \lr{\sform[σ]{x^α,x^β}}_{α,β∈J}
    = \lr{σ\lr{x^{-α+β}}}_{α,β∈J},
  \]
  which is a (generalized) Toeplitz matrix.
\end{remark}

\begin{lemma}\label{lem:inducedformonquotient}
  Assume that $σ\colon L \to \K$ is a $\K$-linear map,
  $\mf{a}\subseteq L$ is an ideal such that $\mf{a},\invol{\mf{a}} \subseteq \kernel σ$.
  Let $W\subseteq L$ be a $\K$-vector subspace.
  Then the sesquilinear form $\sform[σ]{\blank,\blank}$ on $L$
  induces a sesquilinear form
  \[
    \submodulequotient{W}{\mf{a}} × \submodulequotient{W}{\mf{a}} \longrightarrow \K,\qquad
    (\resid{q}, \resid{p}) \longmapsto \sform[σ]{q, p} = σ(\invol{q} p).
  \]
\end{lemma}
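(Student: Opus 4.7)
The goal is to verify that the formula $(\resid{q}, \resid{p}) \mapsto \sform[σ]{q, p}$ is independent of the chosen representatives $p, q \in W$, since sesquilinearity then follows immediately from the sesquilinearity of $\sform[σ]{\blank,\blank}$ on $L$. The plan is to fix representatives $p, p', q, q' \in W$ with $p - p' \in \mf{a} \cap W$ and $q - q' \in \mf{a} \cap W$, and show that $σ(\invol{q} p) = σ(\invol{q'} p')$.

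The key algebraic identity is the additive decomposition
\[
  \invol{q} p - \invol{q'} p' \;=\; \invol{q}\,(p - p') \;+\; \Invol{q - q'}\,p',
\]
which uses only that the involution is additive. The first summand lies in $\mf{a}$ because $p - p' \in \mf{a}$ and $\mf{a}$ is an ideal of $L$, hence $σ(\invol{q}(p-p')) = 0$ by hypothesis. For the second summand, $q - q' \in \mf{a}$ yields $\Invol{q - q'} \in \invol{\mf{a}}$, so I would like to argue that $\invol{\mf{a}}$ is itself an ideal of $L$, which follows from commutativity of $L$: for $a \in \mf{a}$ and $r \in L$, one has $r \invol{a} = \invol{a}\, \Invol{\invol{r}} = \Invol{\invol{r}\, a}$, and $\invol{r} a \in \mf{a}$ since $\mf{a}$ is an ideal. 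Thus $\Invol{q - q'}\, p' \in \invol{\mf{a}} \subseteq \kernel σ$ by the second hypothesis, and applying $σ$ to the displayed identity gives $σ(\invol{q} p) = σ(\invol{q'} p')$, as required.

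Once well-definedness is established, the induced map
\[
  \submodulequotient{W}{\mf{a}} × \submodulequotient{W}{\mf{a}} \longrightarrow \K, \qquad (\resid{q}, \resid{p}) \longmapsto σ(\invol{q} p),
\]
inherits $\invol{}$-semilinearity in the first argument and $\K$-linearity in the second directly from the corresponding properties of $\sform[σ]{\blank,\blank}$ on $L$, because the quotient map $W \to W/(\mf{a} \cap W)$ is $\K$-linear and surjective. The only subtle point is the step involving $\invol{\mf{a}}$, and that is handled by the short commutativity argument above; everything else is routine unwinding of definitions.
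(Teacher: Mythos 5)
Your proof is correct and takes essentially the same approach as the paper's: both reduce well-definedness to showing the form vanishes when an argument lies in $\mf{a}\cap W$, using that $\mf{a}$ is an ideal contained in $\kernel σ$ for the $p$-slot and $\invol{\mf{a}}\subseteq\kernel σ$ for the $q$-slot. The only difference is presentational — you make explicit the commutativity computation showing $\invol{\mf{a}}$ is an ideal of $L$, which the paper leaves implicit when it asserts $\invol{q}p\in\invol{\mf{a}}$ for $q\in\mf{a}\cap W$.
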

Here, $\resid{q},\resid{p}$ denotes the residue class of
polynomials $q,p∈W$ modulo $\mf{a}\cap W$.
We denote the induced sesquilinear form on $\submodulequotient{W}{\mf{a}}$
by $\sform[σ]{\blank,\blank}$ again.
Also note that
that the requirements $\mf{a}\subseteq \kernel σ$ and $\invol{\mf{a}}\subseteq \kernel σ$
are equivalent when the sesquilinear form $\sform[σ]{\blank,\blank}$ on $L$ is Hermitian.
\begin{proof}
  Let $p,q∈W$.
  If $p ∈ \mf{a} \cap W$, then $\invol{q}p$ is contained in
  $\mf{a}\subseteq \kernel σ$, so $σ(\invol{q}p) = 0$.
  Likewise, if $q ∈ \mf{a} \cap W$, then $\invol{q} p ∈ \invol{\mf{a}} \subseteq \kernel σ$,
  so the sesquilinear form on $\submodulequotient{W}{\mf{a}}$ is well-defined.
\end{proof}

\begin{remark}\label{def:inducedmaponquotient}
  If $σ\colon L\to \K$ is $\K$-linear and $\mf{a}\subseteq L$ is an ideal
  such that $\mf{a}\subseteq\kernel σ$,
  then the sesquilinear form $\sform[σ]{\blank,\blank}$ on $L$
  does \emph{not} induce a sesquilinear form on the quotient spaces
  $\submodulequotient{W}{\mf{a}}$.
  (Observe that this would need $\invol{\mf{a}}\subseteq \kernel σ$
  or require the form to be Hermitian, as in \ref{lem:inducedformonquotient}.)
  Many of our arguments here can be transferred to this setting
  by working with a sesquilinear \emph{map} instead of a sesquilinear form;
  for details we refer to \cite[Definition~3.1.12]{wageringel2021}.
\end{remark}

\section{Factorization properties}\label{sec:factorization}

The Vandermonde factorization of \ref{lem:vandermondefactorization:1}
is an essential aspect of Prony's method.
Here, we analyze how to transfer it from measures on zero-dimensional
to measures on positive-dimensional algebraic varieties.
The statements here are also motivated
by the study of finite-rank Hankel operators as in \eg{}\cite{mourrain17:polyexp}.
In the positive-dimensional setting, such operators are not of finite rank anymore,
but some properties are still valid.

Let $\K, R, L$ be as in \ref{sec:sesquilinearity},
so $\K$ is a field of characteristic~$0$, $R = \K[x_1,…,x_n]$ is the polynomial ring in $n$ variables
endowed with a filtration $\{R_{≤d}\}_{d∈ℕ}$
and $L$ is an involutive commutative $\K$-algebra such that $R\subseteq L$.

We wish to examine more closely the following situation.
Let $\mf{a}\subseteq L$ be an ideal and
let $σ\colon L\to\K$ be a $\K$-linear map
with the property that $\mf{a} \subseteq \kernel σ$.
This means that the map $σ$ factors via
the quotient homomorphism
\[
  \proj{\mf{a}}\colon L \longrightarrow L/\mf{a},\qquad
  p \longmapsto \resid{p} \coloneqq p + \mf{a},
\]
which we denote by $\proj{\mf{a}}$,
and a $\K$-linear map $\resid{σ}\colon L/\mf{a} \to \K$, denoted by $\resid{σ}$.

\begin{example}\label{eg:reduced0dim}
  Assume that $L$ is the polynomial ring $R$ and $ξ∈\K^n$
  (or that $L$ is the Laurent polynomial ring in $n$ variables and $ξ∈\lr{\K\withoutzero}^n$).
  Then,
  for the maximal ideal $\maxideal{ξ} = \idealspan{x-ξ} \subseteq L$,
  this gives the evaluation homomorphism at the point $ξ$,
  \[
    \proj{\maxideal{ξ}}\colon L \longrightarrow L/\maxideal{ξ} \cong \K,\qquad
    x^α \longmapsto \resid{x}^α = ξ^α,
  \]
  for $α∈ℕ^n$ (or $α∈ℤ^n$),
  so $\proj{\maxideal{ξ}}(p) = p(ξ)$ for $p∈L$.
  Note further that, for any $\K$-linear map $σ\colon L\to \K$ with $\maxideal{ξ}\subseteq\kernel σ$,
  the linear map $\resid{σ}\colon L/\maxideal{ξ}\cong \K \to \K$
  is determined by a single scalar $λ∈\K$, with respect to a suitable basis.
  Thus, $σ = λ\proj{\maxideal{ξ}} = λ\ev_{ξ} ∈ \kdual L$,
  which we can interpret as an exponential sum of rank~$1$
  if $λ≠0$ (cf.~\ref{sec:multivarprony}).

  More generally, consider the zero-dimensional ideal
  $\mf{a} = \bigcap_{j=1}^r \maxideal{ξ_j}$,
  for distinct points $ξ_1,…,ξ_r$.
  Then it follows from the Chinese Remainder Theorem (cf.~\cite[Chapter~2.1.2, Proposition~5]{bourbaki:comalg}) that
  \[
    L/\mf{a}
    \cong \bigoplus_{j=1}^r L/\maxideal{ξ_j}
    \cong \K^r,
  \]
  where $\proj{\mf{a}}(p)$ is identified with $\lr{p(ξ_1),…,p(ξ_r)}$ for $p∈L$.
  As a $\K$-linear map with respect to the monomial basis of $L$,
  we can view $\proj{\mf{a}}$ as being described by an infinite Vandermonde matrix
  associated to the points $ξ_1,…,ξ_r$.
  If $σ\colon L\to \K$ is a $\K$-linear map with $\mf{a}\subseteq \kernel σ$,
  then it is of the form $σ = \sum_{j=1}^r λ_j \ev_{ξ_j}$
  with suitable parameters $λ_1,…,λ_r∈\K$,
  which corresponds to an exponential sum of rank~$r$ if $λ_1,…,λ_r≠0$.
\end{example}

The ideal $\mf{a}$ does not need to be radical in this setup.
An explicit example is given in \cite[Example~3.2.2]{wageringel2021};
more generally polynomial exponential series as studied in \cite{mourrain17:polyexp}
correspond to non-radical ideals.
Later on, we will focus on the case in which $\mf{a}$ is a vanishing ideal, though.

As $R$ is endowed with a filtration $\{R_{≤d}\}_{d∈ℕ}$
for which each component $R_{≤d}$ is finite-dimensional
and since $R\subseteq L$,
we can restrict the map $\proj{\mf{a}}\colon L\to L/\mf{a}$
to a map on finite-dimensional vector subspaces
$R_{≤d} \to R_{≤d}/\lr{\mf{a}\cap R_{≤d}}$,
which we denote by $\evd{\mf{a}}{d}$,
as explained in \ref{ex:filtrations}.

An important ingredient of Prony's method
is that we can extract information about the vanishing ideal
from the kernel of the moment matrix, if the moment matrix is sufficiently large;
see \ref{lem:vandermondefactorization:2}.
In the following, we examine what is required to transfer this property
to the setting of ideals which are possibly not of dimension zero,
but are of higher dimension.
This is answered by the following \lcnamecref{thm:hankelopfactorization}
as well as \ref{lem:hankelkernelinjectivity} below.

\begin{theorem}\label{thm:hankelopfactorization}
Let $\mf{a}\subseteq L$ be an ideal and
let $σ\colon L\to\K$ be a $\K$-linear map with $\mf{a}\subseteq \kernel σ$.
Then the $\K$-linear map
\[
  H\colon R \longrightarrow \semikdual R,\qquad
  p \longmapsto \lr{q \mapsto \sform[σ]{q, p}},
\]
factors as
\begin{equation}
  \label{eq:hankelopfactors}
  \begin{tikzcd}[row sep=tiny,ampersand replacement=\&]
    R \arrow[r, "\proj{\mf{a}}"] \&
    \submodulequotient{R}{\mf{a}} \arrow[r] \&
    \semikdual{\submodulequotient{R}{\invol{\mf{a}}}} \arrow[r, "\Transp{\proj{\invol{\mf{a}}}}"] \&
    \semikdual R,\\
    \& p + \mf{a}\cap R \arrow[r, mapsto] \&
    \lr{q + \invol{\mf{a}}\cap R \mapsto \sform[σ]{q, p}},
  \end{tikzcd}
\end{equation}
where $\Transp{\proj{\invol{\mf{a}}}}(φ) = φ \compose \proj{\invol{\mf{a}}}$
for $φ ∈ \semikdual{\submodulequotient{R}{\invol{\mf{a}}}}$.

Moreover, the truncated map between finite-dimensional vector subspaces given by
\[
  H_{d+δ,d}\colon
  R_{≤d} \longrightarrow \semikdual{R_{≤d+δ}},\qquad
  p \longmapsto \lr{q \mapsto \sform[σ]{q, p}},
\]
for $d,δ∈ℕ$,
factors as
\begin{equation}
  \label{eq:hankelmatfactors}
  \begin{tikzcd}[row sep=tiny,ampersand replacement=\&]
    R_{≤d} \arrow[d, "\evd{\mf{a}}{d}"] \arrow[r, "H_{d+δ,d}"] \&
    \semikdual{R_{≤d+δ}} \\[5ex]
    \submodulequotient{R_{≤d}}{\mf{a}} \arrow[r, "\resid{H_{d+δ,d}}"] \&
    \semikdual{\submodulequotient{R_{≤d+δ}}{\invol{\mf{a}}}} \arrow[u, "\Transp{\evd{\invol{\mf{a}}}{d+δ}}"],\\
    p + \mf{a}\cap R_{≤d} \arrow[r, mapsto] \&
    \lr{q + \invol{\mf{a}}\cap R_{≤d+δ} \mapsto \sform[σ]{q, p}}.
  \end{tikzcd}
\end{equation}
\end{theorem}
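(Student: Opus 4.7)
The plan is to prove the factorization \myeqref{eq:hankelopfactors} directly, and then obtain \myeqref{eq:hankelmatfactors} as its restriction to the filtered components. The key observation, used twice, is that the hypothesis $\mf{a} \subseteq \kernel σ$ forces $σ(\invol{q}p) = 0$ in two distinct situations: when $p \in \mf{a}$, because $\mf{a}$ is an ideal, so $\invol{q}p \in \mf{a}$; and when $q \in \invol{\mf{a}}$, because then $\invol{q} \in \mf{a}$, so again $\invol{q}p \in \mf{a}$.

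For \myeqref{eq:hankelopfactors}, I would first show that $H(p) = 0$ for $p \in \mf{a} \cap R$, which yields a $\K$-linear factorization $H = \widebar{H} \compose \proj{\mf{a}}$ with $\widebar{H}\colon \submodulequotient{R}{\mf{a}} \to \semikdual{R}$ sending $\resid{p} \mapsto (q \mapsto \sform[σ]{q,p})$. Next, for each fixed $\resid{p}$, I would verify that the functional $q \mapsto \sform[σ]{q,p}$ vanishes on $\invol{\mf{a}} \cap R$, so it descends to an element of $\semikdual{\submodulequotient{R}{\invol{\mf{a}}}}$; its $\invol{}$-semilinearity in $q$ is inherited directly from the semilinearity of the first slot of $\sform[σ]{\blank,\blank}$. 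This produces the middle arrow of \myeqref{eq:hankelopfactors}, and postcomposition with $\Transp{\proj{\invol{\mf{a}}}}$ tautologically recovers $H$, which is exactly commutativity of the diagram.

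For \myeqref{eq:hankelmatfactors}, I would restrict every map in \myeqref{eq:hankelopfactors} to the finite-dimensional subspaces $R_{≤d}$ and $R_{≤d+δ}$. As noted in \ref{ex:filtrations}, the natural map $\submodulequotient{R_{≤d}}{\mf{a}} \hookrightarrow \submodulequotient{R}{\mf{a}}$ is injective, so the truncated diagram sits inside the untruncated one and inherits commutativity without any additional argument. None of the steps is difficult; the only mild obstacle is the bookkeeping around semilinear duals — specifically, verifying that $\Transp{\proj{\invol{\mf{a}}}}$ really maps $\semikdual{\submodulequotient{R}{\invol{\mf{a}}}}$ into $\semikdual{R}$, which holds because $\proj{\invol{\mf{a}}}$ is $\K$-linear and pullback along a $\K$-linear map preserves $\invol{}$-semilinearity.
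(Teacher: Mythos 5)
Your proof is correct and follows essentially the same reasoning as the paper's: both hinge on the single observation that, since $\mf{a}\subseteq\kernel σ$ and $\mf{a}$ is an ideal, perturbing $p$ by $\mf{a}\cap R$ or $q$ by $\invol{\mf{a}}\cap R$ leaves $σ(\invol{q}p)$ unchanged, and the truncated diagram then follows by restriction to the filtered components. The paper collapses the two well-definedness checks into a single coset computation while you spell them out separately and also note the semilinearity bookkeeping, but the underlying argument is the same.
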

\begin{proof}
  Due to the inclusion $\mf{a}\subseteq \kernel σ$,
  we have that
  \[
    σ\lr{\Invol{q + \invol{\mf{a}} \cap R} \lr{p + \mf{a} \cap R}}
    = σ\lr{\lr{\invol{q} + \mf{a} \cap \invol{R}} \lr{p + \mf{a} \cap R}}
    = σ\lr{\invol{q} p}
    = \sform[σ]{q, p},
  \]
  for all $q,p∈R$,
  which shows the first factorization property.
  The other one follows analogously.
\end{proof}

The truncated map $H_{d+δ,d}$ is of importance for us,
since we are interested in recovery from finitely many moments.
By \ref{thm:hankelopfactorization},
it holds that
\[
  \mf{a}\cap R_{≤d} \subseteq \kernel H_{d+δ,d}
\]
and we ask when this is an equality.
This leads to the following \lcnamecref{lem:hankelkernelinjectivity}.
\par
\begin{corollary}\label{lem:hankelkernelinjectivity}
  If the map $\resid{H_{d+δ,d}}\colon
  \submodulequotient{R_{≤d}}{\mf{a}} \to \semikdual{\submodulequotient{R_{≤d+δ}}{\invol{\mf{a}}}}$
  is injective, then
  \[
    \kernel\lr{H_{d+δ,d}} =
    \kernel\lr{\evd{\mf{a}}{d}} =
    \mf{a} \cap R_{≤d}.
  \]
\end{corollary}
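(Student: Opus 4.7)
The plan is to read off both equalities directly from the factorization \myeqref{eq:hankelmatfactors} supplied by \ref{thm:hankelopfactorization}, which expresses $H_{d+δ,d}$ as the composition
\[
    H_{d+δ,d}
    = \Transp{\evd{\invol{\mf{a}}}{d+δ}} \compose \resid{H_{d+δ,d}} \compose \evd{\mf{a}}{d}.
\]
The second equality $\kernel \evd{\mf{a}}{d} = \mf{a}\cap R_{≤d}$ is essentially a definition unraveling: by the construction of the filtered quotient in \ref{ex:filtrations}, the map $\evd{\mf{a}}{d}$ is the restriction of the quotient homomorphism $\proj{\mf{a}}$ to $R_{≤d}$, viewed with codomain $\submodulequotient{R_{≤d}}{\mf{a}}$, so its kernel is exactly $\mf{a}\cap R_{≤d}$.

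For the first equality, I would start by noting that $\Transp{\evd{\invol{\mf{a}}}{d+δ}}$ is automatically injective. Indeed, $\evd{\invol{\mf{a}}}{d+δ}\colon R_{≤d+δ}\to \submodulequotient{R_{≤d+δ}}{\invol{\mf{a}}}$ is a quotient projection and hence surjective; precomposition of a semilinear form on the codomain with a surjection recovers the form uniquely, so $\Transp{\evd{\invol{\mf{a}}}{d+δ}}$ is injective. Combining this with the hypothesis that $\resid{H_{d+δ,d}}$ is injective, the composition $\Transp{\evd{\invol{\mf{a}}}{d+δ}} \compose \resid{H_{d+δ,d}}$ is injective as well. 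From the factorization it then follows that $H_{d+δ,d}(p) = 0$ if and only if $\evd{\mf{a}}{d}(p) = 0$, giving the equality $\kernel H_{d+δ,d} = \kernel \evd{\mf{a}}{d}$.

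This is a routine diagram chase, so I do not anticipate any real obstacle. The genuine content of the corollary is already embedded in \ref{thm:hankelopfactorization}; the injectivity hypothesis on $\resid{H_{d+δ,d}}$ is exactly what is needed at the middle stage of the factorization, while injectivity of the outer semidual map is granted for free by the surjectivity of the quotient projection on $R_{≤d+δ}$.
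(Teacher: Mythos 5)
Your proposal is correct and follows exactly the paper's own argument: read the two equalities off the factorization \myeqref{eq:hankelmatfactors}, using that $\Transp{\evd{\invol{\mf{a}}}{d+δ}}$ is injective (since it is precomposition with the surjective quotient map) together with the injectivity hypothesis on $\resid{H_{d+δ,d}}$. You merely spell out the injectivity of the outer map in more detail than the paper, which takes it for granted.
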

\begin{proof}
  Due to the factorization \myeqref{eq:hankelmatfactors}
  and since the map
  $\Transp{\evd{\invol{\mf{a}}}{d+δ}}$ is injective,
  the equality holds if and only if the map
  $\resid{H_{d+δ,d}}$ is injective.
\end{proof}

As the vector space dimension of the codomain of $\resid{H_{d+δ,d}}$
is finite and at least as large as the dimension of the domain,
saying that $\resid{H_{d+δ,d}}$ is injective
is the same as saying that the map $\resid{H_{d+δ,d}}$ has full rank.
As such, this can be regarded as a variant of
the statement about the Vandermonde factorization in
\ref{lem:vandermondefactorization}.

\begin{remark}
  In this formalism, $\evd{\mf{a}}{d}$ is always surjective,
  which is an important difference from the Vandermonde factorization
  considered in \ref{lem:vandermondefactorization:1},
  as the Vandermonde matrix considered there can be non-surjective for small $d$.
  This is explained further in
  \cref{ex:hankelmatfactors:points} below.
  There, for an ideal of the form
  $\mf{a} = \bigcap_{j=1}^r \maxideal{ξ_j}$,
  the dimension of
  $\image\lr{\evd{\mf{a}}{d}} = \submodulequotient{R_{≤d}}{\mf{a}}$ as vector space is at most $r$,
  but can be smaller.
  Equality holds if and only if the corresponding Vandermonde matrix has rank $r$,
  which only holds if $d$ is sufficiently large.
\end{remark}

Moreover, we remark that the map $\resid{H_{d+δ,d}}$ is injective in particular when
$σ$ is a moment functional of a measure and
$\mf{a}$ is the vanishing ideal of its support,
as will be shown in \ref{thm:idealequalkernel}.

\begin{example}\label{ex:hankelmatfactors:points}
  Let us revisit \ref{eg:reduced0dim},
  so let $\mf{a} \coloneqq \bigcap_{j=1}^r \maxideal{ξ_j} \subseteq L$
  for distinct points $ξ_1,…,ξ_r ∈ \K^n$,
  where now we assume that $L = R = \K[x_1,…,x_n]$
  is endowed with the trivial involution and the filtration induced by total degree.

  If $d$ is sufficiently large,
  $\evd{\mf{a}}{d}$ has rank $r$
  and we have
  $\submodulequotient{R_{≤d}}{\mf{a}} \cong \bigoplus_{j=1}^r R / \maxideal{ξ_j} \cong \K^r$.
  Hence, we also have
  $\submodulequotient{R_{≤d+δ}}{\mf{a}} \cong \K^r$ for all $δ∈ℕ$.
  If $σ\colon R\to \K$ is a $\K$-linear map with $\mf{a}\subseteq \kernel σ$,
  then, by \ref{eg:reduced0dim},
  it is of the form $σ = \sum_{j=1}^r λ_j \ev_{ξ_j}$
  for some $λ_1,…,λ_r ∈ \K$.
  Thus, the map $\resid{H_{d+δ,d}}$ corresponds to the diagonal matrix
  $\diag\lr{λ_1,…,λ_r}$ with respect to the natural bases.
  Clearly, it is injective if and only if $λ_1,…,λ_r≠0$,
  which illustrates the connection of \ref{lem:hankelkernelinjectivity}
  to \ref{lem:vandermondefactorization:2}.
\end{example}
\par
Although for zero-dimensional ideals as in the preceding example
it is enough to consider the case $δ=0$ to infer that
$\kernel H_{d+δ,d} = \mf{a} \cap R_{≤d}$
if $d$ is sufficiently large,
this does not hold in general (cf.~\ref{ex:pointskernelunequaltruncatedideal}).
We will see a non-trivial example in
\ref{ex:signedmeasurewrongkernel},
which involves an ideal of positive dimension.
In connection to that, \ref{thm:idealequalkernelextended}
will show that it can be useful to consider $δ$ larger than $0$.

\section{Recovery of the support from moments}\label{sec:support}

In this \lcnamecref{sec:support},
we explore how to recover the underlying algebraic variety
that a measure is supported on, by using finitely many of its moments.
We consider a non-negative or signed measure $μ$ whose support
lives in the affine space $ℝ^n$ or the complex torus $\T^n$
and wish to find the smallest variety that contains the support.
Following the notation of \ref{sec:sesquilinearity},
we consider the following two cases,
to which we also refer as \emph{affine} and \emph{trigonometric} cases,
respectively:
\begin{enumerate}
  \item
    $Ω = ℝ^n$,
    $\K = ℝ, L = R = ℝ[x_1,…,x_n]$ with trivial involutions
    (cf.~\ref{ex:involution:trivial});
  \item
    $Ω = \T^n$,
    $\K = ℂ, R = ℂ[x_1,…,x_n], L = ℂ[x_1^{±1},…,x_n^{±1}]$
    with complex conjugation and
    involution $\invol{\blank}$ on $L$
    defined as in \ref{ex:involution:conjugation}.
\end{enumerate}

Additionally, we fix a filtration $\{R_{≤d}\}_{d∈ℕ}$ of $R$
consisting of finite-dimensional vector spaces.
Recall that the support of a non-negative or signed measure
is defined as follows.

\begin{definition}[{cf.~\cite[Chapter~1.3]{schwartz1973}}]
  Let $μ$ be a signed measure on $Ω$.
  Then
  \[
    \supp μ \coloneqq
    \braced*{ξ∈Ω \mid μ\restrict{U} ≠ 0\text{\ for all open neighborhoods $U\subseteq Ω$, $ξ∈U$}}
  \]
  is called \emph{support} of $μ$,
  where $μ\restrict{U}$ denotes the restriction of $μ$ to $U$.
\end{definition}

By convention, we consider the support in terms of the standard topology on $Ω$.
The complement of $\supp μ$ in $Ω$ is the union of all open sets
on which $μ$ is constantly zero and is open,
so $\supp μ$ is a closed set.
When we consider the support in terms of the Zariski topology,
we denote it by $\zariski{\supp μ}$ (as a subset of $Ω$ or $(ℂ\withoutzero)^n$).
It is the smallest Zariski-closed set containing $\supp μ$.

This topic has been studied in various forms,
usually in the real affine case with non-negative measures
(\eg{}\cite{lasserre2015:algebraicexponential,lasserre2021:empiricalmomentschristoffel})
and an emphasis on finitely-supported measures;
see for instance \cite{laurentrostalski2012}.
The case of plane algebraic curves has also been investigated in
\cite{vetterli2016}, with a focus on the presence of noise.
The case of plane trigonometric curves on the torus has been considered in
\cite{ongie15:piecewisesmooth,ongie2016:piecewiseconstant}.

We unify the different noise-free settings in \ref{thm:idealequalkernel} and
expand the existing results by \ref{thm:idealequalkernelextended},
a statement for compactly-supported signed measures,
as well as \ref{cor:idealequalkernelpolynomial,thm:idealequalkernel:mixture}.
Moreover, we give examples that highlight the differences
between signed and non-negative measures.

\subsection{Signed measures}\label{sec:support:signedmeasures}

Here, we consider a signed measure $μ$ on $Ω$.
If $\K = ℂ$, as in the trigonometric case, then $μ$ is a complex measure.
As a consequence of the Riesz representation theorem
(see \eg{}\cite[Theorem~6.19]{rudin1987}),
these measures can be defined as elements
in the continuous dual space of
the space $\contincompact{0}(Ω)$ of compactly-supported continuous functions from $Ω$ to $\K$.
We refer to \cite[Chapter~1.2]{schwartz1973} for an extensive treatment of this topic.

In the trigonometric case, all the moments of $μ$ are defined,
as the torus $\T^n$ is compact.
In order to speak of moments
$\int_Ω x^α \d μ$, $α∈ℕ^n$, in the affine case,
we need to make additional assumptions on the measure $μ$,
since the monomials $x^α$ are not compactly-supported functions on $ℝ^n$.
Certainly, the moments are defined when the measure $μ$ itself is compactly supported.
More generally, all the moments are defined
for signed measures with a sufficiently rapid decay toward infinity,
such as those that can be written as a product $μ = g μ_0$
of a Schwartz function $g$ and a tempered distribution $μ_0$
(see \eg{}\cite[Chapter~2]{grafakos2014} or \cite[Chapter~7]{schwartz1973}),
which in particular includes Gaussians and mixtures thereof.
In this \lcnamecref{sec:support:signedmeasures},
we focus on signed measures with compact support only,
as these are determined by their moments.

First, let us take note of the following elementary properties of
the support of the product between a measure and a continuous function.
\par
\begin{lemma}\crefalias{enumi}{lemma}
  Let $μ$ be a signed measure on $Ω$ and let $f,g ∈ \contin{0}(Ω)$ be continuous functions.
  Then:
  \begin{thm-enumerate}
    \item\label{lem:nonvanishingsupportinclusion}
      $\nonV{f} \cap \supp μ \subseteq \supp\lr{f μ}$,
      where $\nonV{f} \subseteq Ω$ denotes the set of points
      in which $f$ does not vanish.
    \item\label{lem:zeroonsupport}
      The measure $f μ$ is zero if and only if $f$ vanishes on $\supp μ$.
    \item\label{lem:samevanishingonsupport}
      If $\nonV{f} \cap \supp μ = \nonV{g} \cap \supp μ$,
      then $\supp\lr{f μ} = \supp\lr{g μ}$.
  \end{thm-enumerate}
\end{lemma}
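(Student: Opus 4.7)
The plan is to prove the three parts sequentially, with part (1) as the technical core and parts (2) and (3) following as consequences.

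For part (1), take $ξ ∈ \nonV{f} \cap \supp μ$. By continuity of $f$, choose an open neighborhood $V$ of $ξ$ on which $\abs{f}$ is bounded below by a positive constant, so that $1/f$ is continuous on $V$. Given any open neighborhood $U$ of $ξ$, let $W \coloneqq U \cap V$. For any test function $ψ ∈ \contincompact{0}(W)$, the function $ψ/f$ (extended by zero outside $W$) lies again in $\contincompact{0}(W)$, and one has
\[
  \int_{Ω} ψ \, \d μ = \int_{Ω} (ψ/f) \, \d(fμ).
\]
If $(fμ)\restrict{U} = 0$, the right-hand side vanishes for every such $ψ$, which by the Riesz representation theorem forces $μ\restrict{W} = 0$; this contradicts $ξ ∈ \supp μ$, so $ξ ∈ \supp(fμ)$.

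Part (2) follows easily. If $f$ vanishes on $\supp μ$, then $ψ f$ is $μ$-almost everywhere zero for every test function $ψ$, so $fμ = 0$. Conversely, if $fμ = 0$ but $f(ξ) ≠ 0$ at some $ξ ∈ \supp μ$, part (1) would give $ξ ∈ \supp(fμ) = \emptyset$, a contradiction. For part (3), I would establish the sharper characterization $\supp(fμ) = \closure{\nonV{f} \cap \supp μ}$. The inclusion $\supseteq$ follows from part (1), since $\supp(fμ)$ is closed. For $\subseteq$, if $ξ$ lies outside the right-hand side, some open neighborhood $U$ of $ξ$ satisfies $U \cap \nonV{f} \cap \supp μ = \emptyset$, meaning $f$ vanishes on $U \cap \supp μ$; applying part (2) to $μ\restrict{U}$ yields $(fμ)\restrict{U} = 0$, so $ξ \notin \supp(fμ)$. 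The analogous characterization for $g$ then gives part (3) directly from the hypothesis, as both supports equal the common closure $\closure{\nonV{f} \cap \supp μ} = \closure{\nonV{g} \cap \supp μ}$.

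The main technical hurdle is the division-by-$f$ step in part (1): I need $ψ/f$ to be a bona fide element of $\contincompact{0}(Ω)$, which requires choosing $V$ small enough that $\abs{f}$ has a uniform positive lower bound (rather than merely being nonzero), so that $1/f$ is continuous on $V$; then the fact that $\supp ψ$ lies compactly inside $V$ ensures $ψ/f$ extends continuously by zero to the rest of $Ω$. Everything else is a direct application of the Riesz duality between signed measures and compactly-supported continuous functions.
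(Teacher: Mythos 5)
Your proof is correct and takes essentially the same approach as the paper: part (1) hinges on the same construction of dividing a test function by $f$ on a neighborhood where $f$ does not vanish (you argue by contraposition, the paper constructs a witness directly), and (2) and (3) are deduced from it in the same spirit. Two minor points of divergence: for the direction of (2) that $f$ vanishing on $\supp μ$ forces $fμ=0$, the paper cites a theorem of Schwartz rather than appealing, as you do without further justification, to the (true but nontrivial) fact that $\abs{μ}\lr{Ω\setminus\supp μ}=0$; and for (3) you establish the sharper identity $\supp\lr{fμ}=\closure{\nonV{f}\cap\supp μ}$, whereas the paper manipulates the complements of the supports directly via (2), though both routes rest on the same ingredients.
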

\begin{proof}
  For \localref{lem:nonvanishingsupportinclusion},
  let $ξ ∈ \supp μ$ be any point such that $f(ξ) ≠ 0$
  and let $U\subseteq Ω$ be an arbitrary open neighborhood of~$ξ$.
  We need to show that $f μ\restrict{U} ≠ 0$.
  For this, let $U_0 \subseteq U$ be an open neighborhood of~$ξ$
  in which $f$ does not have any roots.
  Since $ξ$ is a support point of $μ$,
  there exists a compactly-supported continuous function $φ ∈ \contincompact{0}(U_0)$
  such that $\int_{U_0} φ\,\d μ ≠ 0$.
  Then
  $ψ \coloneqq \frac{φ}{f} ∈ \contincompact{0}(U_0)$
  can be extended trivially to a compactly-supported function $ψ ∈ \contincompact{0}(U)$
  and we have $\int_U ψ\,\d (f μ) = \int_{U_0} \frac{φ}{f}\,\d (f μ) = \int_{U_0} φ\,\d μ ≠ 0$
  and thus $f μ\restrict{U} ≠ 0$,
  which proves the statement.
  For part \localref{lem:zeroonsupport}, assume that $f μ$ is zero.
  Then $\supp\lr{f μ} = ∅$,
  so $f$ vanishes on $\supp μ$ by \localref{lem:nonvanishingsupportinclusion}.
  The converse holds by \cite[Chapter~3, Theorem~33, addendum]{schwartz1973}.
  Finally, for part \localref{lem:samevanishingonsupport},
  observe that the complement of $\supp\lr{f μ}$
  consists of the union of all open sets $U\subseteq Ω$ satisfying $f μ\restrict{U} = 0$.
  By \localref{lem:zeroonsupport}, this is equivalent to $f$ vanishing on $\supp\lr{μ\restrict{U}}$.
  By hypothesis, this is the case if and only if $g$ vanishes on $\supp\lr{μ\restrict{U}}$,
  which in turn is equivalent to $g μ\restrict{U} = 0$
  and thus completes the proof.
\end{proof}

For the remainder of this \lcnamecref{sec:support:signedmeasures},
we fix a filtration $\{L_{≤d}\}_{d∈ℕ}$ of $L$
for which all the components are finite-dimensional vector spaces.
In the affine case, we may choose $L_{≤d} = R_{≤d}$.
Additionally, we denote by $B_d^L$ and $B_d^R$
any bases of the filtered components $L_{≤d}$ and $R_{≤d}$, respectively.
With this notation, we arrive at the following \lcnamecref{thm:idealequalkernelextended}.
\par
\begin{theorem}\label{thm:idealequalkernelextended}
  Let $μ$ be a compactly-supported signed measure on $Ω$,
  denote by $\mf{a} \coloneqq \Id{\supp μ} \subseteq L$
  the vanishing ideal of (the Zariski closure of) its support
  and let $σ\colon L\to\K$ be its moment functional.
  Let $d∈ℕ$. Then
  \[
    \mf{a} \cap R_{≤d} = \kernel H_{d',d}
  \]
  holds for all sufficiently large $d'∈ℕ$,
  where
  $H_{d',d} \coloneqq \lr{\sform[σ]{w,v}}_{w ∈ B_{d'}^L,\,v ∈ B_d^R}$.
\end{theorem}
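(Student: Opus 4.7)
The plan is to prove both inclusions in turn. The inclusion $\mf{a} \cap R_{\le d} \subseteq \kernel H_{d',d}$ is automatic for every $d'$: it follows from the factorization in \ref{thm:hankelopfactorization} once we observe that $\mf{a} \subseteq \kernel \sigma$ (any polynomial vanishing on $\supp \mu$ integrates to zero against $\mu$, by \ref{lem:zeroonsupport}) and that $\invol{\mf{a}} = \mf{a}$ in both cases, trivially in the affine case and because $\supp \mu \subseteq \T^n$ in the trigonometric case (cf.~\ref{ex:involution:conjugation}).

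For the reverse inclusion, the key point is that as $d'$ grows, enlarging $L_{\le d'}$ only adds linear constraints, so the subspaces $\kernel H_{d',d} \subseteq R_{\le d}$ form a decreasing chain inside the finite-dimensional vector space $R_{\le d}$. Such a chain must stabilize, so there exists $d_0 \in ℕ$ with
\[
  \kernel H_{d',d} \;=\; \bigcap_{e \in ℕ} \kernel H_{e,d} \;=\; \braced*{p \in R_{\le d} \mid \sigma\lr{\invol{w} p} = 0 \text{ for all } w \in L}
\]
for every $d' \ge d_0$. Since $\invol{\blank}$ is a bijection on $L$, this last condition is equivalent to $\sigma(u p) = 0$ for all $u \in L$, that is, $\int_\Omega u p \,\d\mu = 0$ for every (Laurent) polynomial $u$.

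To finish, I fix such a $p$ and consider the compactly-supported signed measure $\nu \coloneqq p\mu$ on $\Omega$. The hypothesis on $p$ becomes $\int_\Omega u \,\d\nu = 0$ for all $u \in L$, so every polynomial moment (respectively Fourier coefficient) of $\nu$ vanishes. In the affine case, Stone--Weierstrass shows that polynomials are uniformly dense in $C(K)$ for any compact $K \subseteq ℝ^n$; in the trigonometric case, Laurent polynomials are dense in $C(\T^n)$ because they separate points, contain the constants, and are closed under conjugation on $\T^n$ (where $x^{-\alpha} = \conj{x^\alpha}$). In either setting $\nu$ must be the zero measure, so by \ref{lem:zeroonsupport} the polynomial $p$ vanishes on $\supp \mu$, i.e.\ $p \in \mf{a}$, as required.

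The only real work is the stabilization argument and the Stone--Weierstrass step; neither is technically hard, but care is needed to handle the affine and trigonometric cases in parallel and to confirm that $\invol{\mf{a}}=\mf{a}$ so that the factorization of \ref{thm:hankelopfactorization} applies.
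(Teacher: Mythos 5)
Your proof is correct and takes essentially the same route as the paper's: the easy inclusion via \ref{lem:zeroonsupport}, the stabilization of the descending chain of kernels inside the finite-dimensional space $R_{\le d}$, and then Weierstrass/Stone--Weierstrass applied to the compactly supported signed measure $\nu = p\mu$ to conclude that vanishing of all moments forces $\nu = 0$ and hence $p \in \mf{a}$. The paper phrases the final step as a contradiction (pick $p \notin \mf{a}$, produce a nonvanishing moment of $p\mu$) while you argue directly, but these are the same argument.
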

It then follows from Hilbert's basis theorem that
$\mf{a}$ is generated by $\kernel H_{d',d}$
if $d∈ℕ$ is sufficiently large.
\begin{proof}
  As the measure $μ$ is compactly supported, all its moments exist.
  Let $d∈ℕ$ be arbitrary
  and observe that
  \[\label{eq:idealsubsetkernel}
    \mf{a}\cap R_{≤d} \subseteq \kernel H_{d',d}
    = \braced*{p∈R_{≤d}\mid \sform[σ]{q,p} = 0\text{\ for all $q∈L_{≤d'}$}},
  \]
  for all $d'∈ℕ$.
  Indeed, if $p∈\mf{a}$, then $p$ vanishes on the support of $μ$,
  so $\sform[σ]{q,p} = \int_Ω \invol{q} p\,\d μ = 0$
  for all $q∈L$, by \ref{lem:zeroonsupport}.
  More specifically, we have a descending chain
  \[
    R_{≤d} \supseteq \kernel H_{0,d} \supseteq \kernel H_{1,d} \supseteq \cdots \supseteq \mf{a}\cap R_{≤d}
  \]
  which must stabilize,
  so we can fix a $d'∈ℕ$ such that
  \[\label{eq:kernelstabilizes}
    \kernel H_{d',d} = \kernel H_{d'+δ,d}
  \]
  holds for all $δ∈ℕ$.

  Assume that $\kernel H_{d',d} \nsubseteq \mf{a}\cap R_{≤d}$.
  Then we can choose a polynomial $p ∈ \kernel H_{d',d}$
  with $p\notin\mf{a}$,
  so $p$ does not vanish everywhere on $\supp μ$.
  Hence, by \ref{lem:zeroonsupport},
  the signed measure $ν \coloneqq p μ$ is non-zero,
  so there exists a compactly-supported continuous function
  $φ ∈ \contincompact{0}(Ω)$ such that
  $\int_Ω φ\,\d ν ≠ 0$.
  By the Weierstrass approximation theorem
  (see \cite[Chapter~5, Theorem~8.1]{conway1990} for the affine real\footnote{%
    This argument would not hold if, in the affine case,
    we were to work over the field of complex numbers,
    as the algebra of polynomials on $ℂ^n$ is not closed under conjugation.}
  and \cite[Corollary~3.2.2]{grafakos2014}
  for the trigonometric version),
  the function $φ$ can be uniformly approximated by polynomials in $L$
  on a compact set containing the support of the measure $ν$,
  which implies that not all moments of $ν$ can be zero.
  Hence, there exists a polynomial $q∈L$ such that
  $\int_Ω q\,\d ν = \int_Ω q p\,\d μ = \sform[σ]{\invol{q},p} ≠ 0$.
  As $\invol{q}∈L_{≤d'+δ}$ for some $δ∈ℕ$,
  this implies that $p \notin \kernel H_{d'+δ,d}$,
  which is a contradiction to \myeqref{eq:kernelstabilizes},
  by the choice of the polynomial $p$.
\end{proof}

\begin{remark}\label{rem:compactsupportdiscussion}
  In the proof of \ref{thm:idealequalkernelextended},
  the hypothesis that the support of the signed measure $μ$ is compact
  does not only guarantee that all its moments exist,
  but, more importantly,
  it asserts that the signed measure $ν = p μ$ is determined by its moments,
  so that $ν$ is already zero if all its moments vanish.
  This does not in general hold for measures that are not compactly supported --
  not even for rapidly decreasing functions.
  For instance, let $g$ be a non-zero Schwartz function on $ℝ^n$
  such that all its derivatives vanish at the origin,
  \ie $(\partial^α g)(0) = 0$ for all $α∈ℕ^n$.
  Then its Fourier transform $\fourier{g}$ is a non-zero Schwartz function satisfying
  \[
    (-1)^n \lr{2π\I}^{\totaldeg{α}} \int_{ℝ^n} x^α \fourier{g}(x) \d x
    = \lr{\partial^α g}(0)
    = 0
  \]
  for all $α∈ℕ^n$ (cf.~\cite[Proposition~2.2.11\,(10)]{grafakos2014}),
  so all the moments of $\fourier{g}$ are zero.
\end{remark}

\begin{remark}\label{rem:idealequalkernelextended}
  In the affine case of \ref{thm:idealequalkernelextended},
  we can choose the filtration of $L$ as $L_{≤d} = R_{≤d}$ for all $d∈ℕ$.
  Let $H_{d',d}$ be
  the rectangular moment matrix satisfying
  the statement of the \lcnamecref{thm:idealequalkernelextended},
  so $\mf{a} \cap R_{≤d} = \kernel H_{d',d}$.
  By \ref{lem:hankelkernelinjectivity},
  this equality can only hold when the induced map
  $\resid{H_{d',d}}$ on the quotient spaces,
  as in \myeqref{eq:hankelmatfactors} of \ref{thm:hankelopfactorization},
  is injective.
  This implies $d' ≥ d$, for this choice of filtration in the affine case.
  This is in contrast to the statement of \ref{thm:pronyohe}
  in which the moment matrix $H_{d',d}$ had a different shape.
\end{remark}

By \ref{thm:idealequalkernelextended},
we can recover the vanishing ideal of the support
from finitely many moments.
In particular, this means that the kernel of the non-truncated moment map
also yields the vanishing ideal,
as the following statement shows.
\par
\begin{corollary}\label{cor:idealequalkernelnontruncated}
  Under the assumptions of \ref{thm:idealequalkernelextended}, we have
  \[
    \mf{a} \cap R = \kernel H,
  \]
  where $H$ denotes the map
  $H\colon R \to \semikdual{L}$, $p\mapsto \lr{q\mapsto \sform[σ]{q,p}}$.
\end{corollary}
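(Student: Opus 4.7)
The plan is to establish both inclusions separately, leveraging \ref{thm:idealequalkernelextended} for the nontrivial one and the exhaustiveness of the filtration $\{R_{≤d}\}_{d∈ℕ}$ to pass from the truncated to the non-truncated setting.

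For the inclusion $\mf{a}\cap R\subseteq \kernel H$, I would repeat the short argument already used at the start of the proof of \ref{thm:idealequalkernelextended}: if $p\in\mf{a}$, then $p$ vanishes on $\supp μ$, so for every $q\in L$ the product $\invol{q}p$ also vanishes on $\supp μ$; by \ref{lem:zeroonsupport}, the signed measure $\invol{q}p\,μ$ is zero, hence $\sform[σ]{q,p}=σ(\invol{q}p)=\int_Ω \invol{q}p\,\d μ = 0$. This shows $H(p)=0$, so $p\in\kernel H$.

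For the reverse inclusion $\kernel H\subseteq \mf{a}\cap R$, the idea is to reduce to the truncated statement. Pick any $p\in\kernel H$; since $R=\bigcup_{d∈ℕ} R_{≤d}$, there is some $d∈ℕ$ with $p\in R_{≤d}$. Apply \ref{thm:idealequalkernelextended} to this $d$ to obtain a sufficiently large $d'∈ℕ$ with $\mf{a}\cap R_{≤d}=\kernel H_{d',d}$. Because $H_{d',d}$ is precisely the restriction of $H(p)\in\semikdual{L}$ to the subspace $L_{≤d'}\subseteq L$, the condition $H(p)=0$ entails $H_{d',d}(p)=0$, so $p\in\kernel H_{d',d}=\mf{a}\cap R_{≤d}\subseteq \mf{a}\cap R$.

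There is essentially no obstacle here; the corollary is a direct consequence of the fact that the filtration is exhaustive together with the truncated identification already established. The only minor point to handle carefully is noting that $H$ and $H_{d',d}$ use the same sesquilinear form $\sform[σ]{\blank,\blank}$, so membership in $\kernel H$ automatically forces membership in every $\kernel H_{d',d}$ containing $p$ in its domain, which is what makes the passage from \ref{thm:idealequalkernelextended} to the corollary essentially immediate.
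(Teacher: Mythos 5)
Your proposal is correct and follows essentially the same path as the paper's proof: the easy inclusion via \ref{lem:zeroonsupport}, and the reverse inclusion by using exhaustiveness of the filtration to place $p$ in some $R_{\leq d}$ and then invoking \ref{thm:idealequalkernelextended} to identify $\kernel H_{d',d}$ with $\mf{a}\cap R_{\leq d}$.
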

\begin{proof}
  To see this, first observe that we always have
  the inclusion $\mf{a} \cap R \subseteq \kernel H$,
  by \ref{lem:zeroonsupport}.
  On the other hand, if $p∈\kernel H$, then $p∈R_{≤d}$ for some $d∈ℕ$.
  In particular, this implies $\sform[σ]{q,p} = 0$ for all $q∈L_{≤d'}\subseteq L$
  and arbitrary $d'∈ℕ$.
  Choosing $d'$ as in \ref{thm:idealequalkernelextended},
  we therefore obtain
  $p∈\kernel H_{d',d} = \mf{a}\cap R_{≤d}$,
  so the statement follows.
\end{proof}

\begin{remark}\label{rem:idealequalkernelextended:nobound}
  \ref{thm:idealequalkernelextended} does not quantify
  what it means for $d'∈ℕ$ to be large enough for the statement to hold.
  In general, the choice of $d'$ cannot be made purely based on knowledge
  of the support or its vanishing ideal,
  but it must inherently depend on the signed measure itself.
  Indeed, for arbitrarily large $d,d'∈ℕ$,
  one can construct a signed measure with the following properties:
  its support is compact and Zariski-dense, so its vanishing ideal is zero,
  and all the low order moments vanish so that the matrix
  $H_{d',d} = \lr{\sform[σ]{w,v}}_{w ∈ B_{d'}^L,\,v ∈ B_d^R}$
  is zero.
  Hence, the kernel of $H_{d',d}$ is non-zero
  and thus is not a generating set of the zero ideal,
  the vanishing ideal of the support.
  In other words, $d'$ is not large enough
  for the statement of the \lcnamecref{thm:idealequalkernelextended} to hold.
  However, for particular signed measures,
  a bound on $d'$ is given in \ref{cor:idealequalkernelpolynomial}.
\end{remark}

\begin{remark}\label{rem:idealequalkernelextended:bases}
  In the trigonometric case,
  we could also state \ref{thm:idealequalkernelextended}
  in a more symmetric fashion
  in terms of a matrix for which both rows and columns are indexed
  by $B_d^L$, a basis of the filtered component $L_{≤d}$.
  We prefer to index the columns by $B_d^R$ instead
  because it allows for a finer filtration,
  \ie the filtered components $R_{≤d}$ can be chosen to be of smaller dimension
  than the components $L_{≤d}$,
  and every ideal in $L$ can be generated by elements in $R$.
  Indexing the rows of the matrix by $B_{d'}^L$
  is needed in the proof of \ref{thm:idealequalkernelextended}
  due to the use of the Weierstrass approximation theorem.
  This leads to the question whether a statement
  similar to \ref{thm:idealequalkernelextended} is possible
  in which rows and columns are indexed by $B_{d'}^R$, $B_d^R$,
  \ie bases of components of the filtration on $R$
  instead of $L$.
  In general,
  this is answered negatively by the following example,
  but a positive answer is possible for non-negative measures,
  as will be shown in \ref{sec:support:nonnegativemeasures}.
\end{remark}
\par
\begin{example}\label{ex:signedmeasurewrongkernel}
  We consider the two-dimensional trigonometric case,
  so let $n=2$.
  Let $v_1 \coloneqq \lr{2,1}, v_2 \coloneqq \lr{1,2} ∈ ℤ^2$ and
  define the functionals
  \[
    σ_j\colon L\longrightarrow ℂ,\qquad
    x^α \longmapsto
    \begin{cases}
      1 &\text{if $\scalarprod{α}{v_j} = 0$,}\\
      0 &\text{otherwise,}
    \end{cases}
  \]
  for $α∈ℤ^2$ and $j=1,2$.
  These are moment functionals of uniform measures supported on
  the one-dimensional varieties in $\T^2$ that are defined by the polynomials
  $x_1 - x_2^2$ and $x_1^2 - x_2$, respectively,
  and are depicted in \ref{fig:trigonometriclines}.
  Thus, the functional $σ \coloneqq σ_1 - σ_2$
  is a moment functional of a signed measure.

  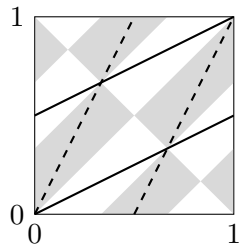
\begin{figure}[ht]
    \centering
    \begin{tikzpicture}
      \begin{axis}[
        xmin=0, xmax=1,
        ymin=0, ymax=1,
        xtick distance = 1,
        ytick distance = 1,
        minor x tick num = 3,
        minor y tick num = 3,
        tick style={draw=none},
        width = 4.2cm,
        height = 4.2cm,
        ]
        \fill[opacity=.3,gray] (2/3,0) -- (5/6,1/6) -- (2/3,1/3) -- (1/3,0) -- cycle;
        \fill[opacity=.3,gray] (1/3,1) -- (1/6,5/6) -- (1/3,2/3) -- (2/3,1) -- cycle;
        \fill[opacity=.3,gray] (0,0) -- (1/2,1/2) -- (1/3,2/3) -- (0,1/3) -- cycle;
        \fill[opacity=.3,gray] (1,1) -- (1/2,1/2) -- (2/3,1/3) -- (1,2/3) -- cycle;
        \fill[opacity=.3,gray] (0,2/3) -- (1/6,5/6) -- (0,1) -- cycle;
        \fill[opacity=.3,gray] (1,1/3) -- (5/6,1/6) -- (1,0) -- cycle;
        \draw[thick] (0.0,0.0) -- (1.0,0.5);
        \draw[thick] (0.0,0.5) -- (1.0,1.0);
        \draw[thick,dashed] (0.0,0.0) -- (0.5,1.0);
        \draw[thick,dashed] (0.5,0.0) -- (1.0,1.0);
      \end{axis}
    \end{tikzpicture}
    \caption{The varieties $\V{x_1 - x_2^2}$ (solid) and $\V{x_1^2 - x_2}$ (dashed)
      on the torus $\T^2$ parametrized by $\linterval{0,1}^2$.
      The shaded region designates where
      the polynomial $g$ from \ref{rem:lagrangecondition} is negative.}
    \label{fig:trigonometriclines}
  \end{figure}

  Observe that $\sform[σ]{x^α, 1} = σ\lr{x^{-α}} = 0$
  holds for all $α∈ℕ^2$.
  This implies that $\sform[σ]{q,1} = 0$ for all $q∈R$.
  Hence, for every choice of $d,d'$,
  the polynomial $p\coloneqq 1$ is contained in the kernel of the moment matrix
  $H_{d',d} \coloneqq
  \lr{\sform[σ]{w,v}}_{w ∈ B_{d'}^R,\,v ∈ B_d^R}$,
  where $B_d^R$ denotes a basis of $R_{≤d}$,
  with respect to any filtration of $R$.
  As $p = 1$ does not vanish on any non-empty variety,
  this shows that the statement of \ref{thm:idealequalkernelextended}
  does not hold for this matrix $H_{d',d}$
  with rows indexed by $B_d^R$ rather than $B_d^L$.

  Additionally, this shows that the kernel of the non-truncated map
  $R \to \semikdual{R}$, $p\mapsto \lr{q\mapsto \sform[σ]{q,p}}$,
  is not in general an ideal in $R$, in the trigonometric case.
  For instance, we have $\sform[σ]{x_2^2, x_1} = σ\lr{x^{\lr{1,-2}}} ≠ 0$,
  so $x_1 \notin \kernel H$, even though $1 ∈ \kernel H$.
\end{example}

\subsection{Non-negative measures}\label{sec:support:nonnegativemeasures}

In this \lcnamecref{sec:support:nonnegativemeasures},
we consider non-negative measures as well as
statements about signed measures that involve non-negative measures.
The non-negativity is an essential property
that allows us to state
the following \lcnamecref{thm:idealequalkernel}
which is a stronger version of \ref{thm:idealequalkernelextended}.
If $W = R_{≤d}$ is a component of the total degree filtration,
then in the affine case
this statement can also be obtained with a different proof by combining
\cite[Theorem~2.10]{laurentrostalski2012}
and \cite[Lemma~5]{lasserre2021:empiricalmomentschristoffel}.
\par
\begin{proposition}\label{thm:idealequalkernel}
  Let $μ$ be a non-negative measure on $Ω$ with finite moments,
  let $\mf{a} \coloneqq \Id{\supp μ} \subseteq L$
  be the vanishing ideal of (the Zariski closure of) its support
  and let $σ\colon L\to\K$ be its moment functional.
  Let $W\subseteq L$ be a $\K$-vector subspace.
  Then $\sform[σ]{\blank,\blank}$ induces a
  positive-definite form on $\submodulequotient{W}{\mf{a}}$.

  In particular, if $W$ is finite-dimensional and $B$ is a basis of $W$,
  let $H \coloneqq \lr{\sform[σ]{w,v}}_{w,v∈B}$.
  Then
  \[
    \mf{a} \cap W = \kernel H.
  \]
  Furthermore, $H$ is non-singular if and only if the elements of $B$
  are linearly independent modulo $\mf{a}\cap W$.
\end{proposition}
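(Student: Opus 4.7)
The plan is to reduce everything to the identity $\sform[σ]{p,p} = \int_Ω \abs{p}^2 \d μ$ and then exploit non-negativity of the measure. First I will verify the hypotheses of \ref{lem:inducedformonquotient}, namely that $\mf{a}, \invol{\mf{a}} \subseteq \kernel σ$. The inclusion $\mf{a}\subseteq \kernel σ$ is immediate: every $p∈\mf{a}$ vanishes on $\supp μ$, and since $μ$ is concentrated on its support, $σ(p) = \int p\,\d μ = 0$. For $\invol{\mf{a}}$, in the affine case the involution is trivial so $\invol{\mf{a}} = \mf{a}$, while in the trigonometric case $\mf{a}$ is the vanishing ideal of a subset of $\T^n$, so $\invol{\mf{a}} = \mf{a}$ by the observation recorded in \ref{ex:involution:conjugation}. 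Thus \ref{lem:inducedformonquotient} applies and produces a well-defined sesquilinear form $\sform[σ]{\blank,\blank}$ on $\submodulequotient{W}{\mf{a}}$.

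Next I would verify positive-semidefiniteness. For $p∈W$, in the affine case $\sform[σ]{p,p} = \int_{ℝ^n} p^2\,\d μ ≥ 0$ since $p$ is real-valued, and in the trigonometric case $\invol{p}$ restricted to $\T^n$ equals the complex conjugate $\conj{p}$, so $\sform[σ]{p,p} = \int_{\T^n} \conj{p}\,p\,\d μ = \int_{\T^n}\abs{p}^2\,\d μ ≥ 0$. To upgrade to positive-definiteness on the quotient, suppose $\sform[σ]{p,p}=0$; then $\abs{p}^2=0$ holds $μ$-almost everywhere. The set $\nonV{p} = \{ξ∈Ω \mid p(ξ)≠0\}$ is open, because $p$ is continuous on $Ω$ in both cases, and it has $μ$-measure zero. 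By definition of $\supp μ$ as the complement of the union of all open $μ$-null sets, $\nonV{p}$ is disjoint from $\supp μ$, so $p$ vanishes on $\supp μ$, which means $p∈\mf{a}\cap W$, \ie $\resid{p} = 0$ in $\submodulequotient{W}{\mf{a}}$. This establishes the first part.

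For the matrix statement, expanding an arbitrary $p = \sum_{v∈B} p_v v ∈ W$ in the basis $B$ identifies $\kernel H$ with the set of $p∈W$ satisfying $\sform[σ]{w,p} = 0$ for all $w∈B$, equivalently for all $w∈W$. Taking $w=p$ and using positive-definiteness on the quotient gives $p∈\mf{a}\cap W$, so $\kernel H \subseteq \mf{a}\cap W$. The reverse inclusion is immediate from the well-definedness of the form on the quotient: if $p∈\mf{a}\cap W$, then $\resid{p}=0$ and hence $\sform[σ]{w,p} = 0$ for every $w$. Thus $\kernel H = \mf{a}\cap W$. Finally, $H$ is non-singular iff $\kernel H = 0$, iff the only linear combination of elements of $B$ lying in $\mf{a}\cap W$ is the trivial one, which is exactly linear independence of $B$ modulo $\mf{a}\cap W$.

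The only genuinely non-routine step is the implication \enquote{$\abs{p}^2=0$\ $μ$-a.e.\ $\Longrightarrow$ $p$ vanishes on $\supp μ$}, which rests on the continuity of $p$ (so that $\nonV{p}$ is open) and on the topological definition of support (so that an open $μ$-null set is disjoint from $\supp μ$); every other step is a formal manipulation built on \ref{lem:inducedformonquotient} and the non-negativity of $μ$.
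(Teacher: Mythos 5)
Your proof is correct and takes essentially the same route as the paper: both reduce to $\sform[\sigma]{p,p}=\int_\Omega\abs{p}^2\,\d\mu\ge 0$ and use non-negativity to conclude that $\sform[\sigma]{p,p}=0$ forces $p$ to vanish on $\supp\mu$. The only notable difference is at that key non-degeneracy step: the paper invokes a result of Schm\"udgen (continuous $f\ge 0$ with $\int f\,\d\mu=0$ vanishes on $\supp\mu$), whereas you argue it directly from the topological definition of support (openness of $\nonV{p}$ plus $\mu$-nullity), and you also make explicit the verification of $\invol{\mf{a}}=\mf{a}$ required to apply \ref{lem:inducedformonquotient}, which the paper leaves implicit. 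These are cosmetic, self-contained variations of the same argument, not a different approach.
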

For the statement, only finiteness of the moments that occur in $H$ is needed,
so $σ$ must be defined on the subspace $\invol{W}\cdot W \subseteq L$.
\begin{proof}
  First observe that
  $\sform[σ]{\blank,\blank}$ is positive-semidefinite,
  as $\sform[σ]{p,p} = \int_{Ω} \abs{p(x)}^2 \d μ(x) ≥ 0$ for all $p∈L$.
  By \ref{lem:inducedformonquotient},
  $\sform[σ]{\blank,\blank}$ induces a form on $\submodulequotient{W}{\mf{a}}$
  and we need to show that it is non-degenerate.
  Assume that $p ∈ W$ is a polynomial such that $\sform[σ]{p,p} = 0$.
  Since $\abs{p}^2 ≥ 0$ on $Ω$, it follows from
  \cite[Proposition~1.23]{schmuedgen2017} that
  $\abs{p}^2$ vanishes on $\supp μ$
  and thus $p ∈ \mf{a}$.
  Hence, the induced form is non-degenerate
  and we have $\kernel H \subseteq \mf{a} \cap W$.

  Conversely, if $p ∈ \mf{a} \cap W$,
  then $(\invol{q} p)(ξ) = \invol{q}(ξ) p(ξ) = 0$ for all $ξ ∈ \supp μ$
  and all $q ∈ L$.
  Thus, in particular, we have
  $σ(\invol{q} p) = \int_{Ω} \invol{q}(x) p(x) \d μ(x) = 0$
  for all $q ∈ W$, so $p∈\kernel H$.

  From this, the addendum readily follows.
  If $H$ is non-singular, we have $\mf{a}\cap W = \kernel H = 0$,
  so the elements of $B$ are linearly independent modulo $\mf{a}\cap W$.
  If $H$ is singular, we find a non-trivial linear combination
  $q = \sum_{w∈B} q_w w ≠ 0$, $q_w∈\K$,
  with $q ∈ \kernel H = \mf{a}\cap W$,
  so $q \equiv 0 \pmod{\mf{a}\cap W}$.
\end{proof}

In particular, \ref{thm:idealequalkernel} holds with $W = R_{≤d}$ for any $d∈ℕ$,
so that
\[
  \mf{a} \cap R_{≤d} = \kernel H.
\]
Again, by Hilbert's basis theorem,
the ideal $\mf{a}$ is generated by $\mf{a} \cap R_{≤d}$
if $d$ is sufficiently large.
Hence, for such a number $d$,
the kernel of $H$ generates the ideal $\mf{a}$,
which is the statement of \cite[Theorem~2.10]{laurentrostalski2012},
so we can fully recover the ideal $\mf{a}$ from finitely many moments.

\begin{lemma}\label{lem:idealequalkernelnonneg}
  Let $\{F_d\}_{d∈ℕ}$ be a filtration of $R$ or $L$.
  Let $μ$ be a signed measure on $Ω$
  and $g ∈ F_δ$ for some $δ∈ℕ$
  such that $μ_+ = \invol{g} μ$
  is a non-negative measure with finite moments
  satisfying $\supp μ = \supp μ_+$.
  Then
  \[
    \Id{\supp μ} \cap F_d = \kernel H_{d+δ,d},
  \]
  for every $d∈ℕ$
  with $H_{d+δ,d} \coloneqq \lr{\sform[σ]{w,v}}_{w∈B_{d+δ},\,v∈B_d}$,
  where $σ\colon L\to\K$ denotes the moment functional of $μ$
  and $B_d,B_{d+δ}$ denote finite bases of $F_d,F_{d+δ}$, respectively.
\end{lemma}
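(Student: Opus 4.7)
The plan is to reduce the statement to \ref{thm:idealequalkernel} applied to the auxiliary non-negative measure $μ_+ = \invol{g}μ$, exploiting the hypothesis $\supp μ = \supp μ_+$ so that both measures share the same vanishing ideal $\mf{a} \coloneqq \Id{\supp μ}$. The central algebraic manipulation is to relate the moment functionals: from $μ_+ = \invol{g}μ$ one obtains $σ_+(p) = σ(\invol{g} p)$ for every polynomial $p$ on which $σ_+$ is defined, and therefore
\[
  \sform[σ_+]{q, p} = σ_+(\invol{q} p) = σ\lr{\invol{g}\invol{q} p} = σ\lr{\Invol{gq} p} = \sform[σ]{gq, p}.
\]
Since $g \in F_δ$, multiplication by $g$ sends $F_d$ into $F_{d+δ}$ and intertwines the two sesquilinear forms.

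Next I would apply \ref{thm:idealequalkernel} to the non-negative measure $μ_+$, with the finite-dimensional subspace $W = F_d$, to obtain
\[
  \mf{a} \cap F_d = \kernel H_d^+, \qquad H_d^+ \coloneqq \lr{\sform[σ_+]{q, p}}_{q,p \in B_d},
\]
and then establish $\mf{a} \cap F_d = \kernel H_{d+δ,d}$ by two short inclusions. For $\kernel H_{d+δ,d} \subseteq \mf{a} \cap F_d$: any $p \in \kernel H_{d+δ,d}$ satisfies $\sform[σ]{w,p} = 0$ for all $w \in F_{d+δ}$, and specializing to $w = gq$ with $q \in B_d$ together with the identity above yields $\sform[σ_+]{q,p} = 0$ for all $q \in B_d$, so $p \in \kernel H_d^+ = \mf{a} \cap F_d$. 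For the reverse inclusion $\mf{a} \cap F_d \subseteq \kernel H_{d+δ,d}$: if $p \in \mf{a} \cap F_d$ and $w \in B_{d+δ}$, then $\invol{w} p$ vanishes pointwise on $\supp μ$, so $\sform[σ]{w,p} = \int_Ω \invol{w} p\,\d μ = 0$, with the integral being well-defined because the hypothesis implicitly requires the entries of $H_{d+δ,d}$ to be finite.

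The main subtlety is that $μ$ itself is not assumed to have all moments finite in the affine case; only $μ_+$ is. This is harmless because the statement only concerns the specific matrix $H_{d+δ,d}$, whose entries exist by hypothesis, and the argument for $\mf{a} \cap F_d \subseteq \kernel H_{d+δ,d}$ uses only pointwise vanishing of $\invol{w} p$ on $\supp μ$. The condition $\supp μ = \supp μ_+$ is the critical geometric hypothesis: by \ref{lem:nonvanishingsupportinclusion} the inclusion $\supp μ_+ \subseteq \supp μ$ may a priori be strict if $g$ has zeros on $\supp μ$, and without equality the two vanishing ideals could differ, so the reduction to \ref{thm:idealequalkernel} would fail.
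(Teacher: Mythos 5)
Your proof is correct and follows essentially the same route as the paper's: both relate $\sigma_+$ to $\sigma$ via $\sform[\sigma_+]{q,p} = \sform[\sigma]{gq,p}$, use $g F_d \subseteq F_{d+\delta}$ to pass from $\kernel H_{d+\delta,d}$ to the kernel of the moment matrix for $\mu_+$ on $F_d \times F_d$, invoke \ref{thm:idealequalkernel} for the non-negative measure $\mu_+$ with $W = F_d$, and finish with $\supp\mu = \supp\mu_+$. The paper packages this as a single chain of inclusions $\mf{a}\cap F_d \subseteq \kernel H_{d+\delta,d} \subseteq \Id{\supp\mu_+}\cap F_d = \mf{a}\cap F_d$, whereas you split it into two inclusions, but the content is the same.
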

\begin{proof}
  Observe that
  \begin{align}
    \Id{\supp μ} \cap F_d
    \subseteq \kernel H_{d+δ, d}
    &=
    \braced*{p ∈ F_d \;\middle|\; \int_{Ω} \invol{q} p \,\d μ = 0\text{\ for all $q ∈ F_{d+δ}$}}\\
    &\subseteq\label{eq:idealequalkernelpolynomial:inclusions}
    \braced*{p ∈ F_d \;\middle|\; \int_{Ω} \Invol{g q} p \,\d μ = 0\text{\ for all $q ∈ F_d$}},
  \end{align}
  where the last inclusion holds due to
  $g F_d \subseteq F_{d+δ}$.
  As $\invol{g} μ = μ_+$ is a non-negative measure on $Ω$,
  it follows from \ref{thm:idealequalkernel} that
  the set \myeqref{eq:idealequalkernelpolynomial:inclusions} is equal to
  $\Id{\supp μ_+} \cap F_d$.
  Then the statement follows from $\supp μ_+ = \supp μ$.
\end{proof}

For signed measures that are a product of a polynomial and a non-negative measure,
we then obtain the following result,
which in contrast to \ref{thm:idealequalkernelextended}
comes with an explicit bound on the size of the moment matrix
and does not require compactness of the support.
\par
\begin{corollary}\label{cor:idealequalkernelpolynomial}
  Let $\{F_d\}_{d∈ℕ}$ be a filtration of $R$ or $L$.
  Let $μ = g μ_+$ be a signed measure, where
  $μ_+$ denotes a non-negative measure on $Ω$ with finite moments
  and $g ∈ F_δ$ a polynomial for some $δ∈ℕ$.
  Then
  \[
    \Id{\supp μ} \cap F_d = \kernel H_{d+δ,d},
  \]
  for every $d∈ℕ$
  with $H_{d+δ,d} \coloneqq \lr{\sform[σ]{w,v}}_{w∈B_{d+δ},\,v∈B_d}$,
  where $σ\colon L\to\K$ denotes the moment functional of $μ$
  and $B_d,B_{d+δ}$ denote finite bases of $F_d,F_{d+δ}$, respectively.
\end{corollary}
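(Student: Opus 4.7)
The plan is to apply \cref{lem:idealequalkernelnonneg} to the signed measure $μ = g μ_+$ using the polynomial $g$ itself. That lemma requires $\invol{g} μ$ to be a non-negative measure with finite moments whose support coincides with that of $μ$. In our setting,
\[
  \invol{g} μ = \invol{g} g μ_+,
\]
and we verify the three required properties in turn.

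Non-negativity holds pointwise on $Ω$: in the affine case the involution is trivial so $\invol{g} g = g^2 ≥ 0$, while in the trigonometric case $\invol{g}(ξ) = \conj{g(ξ)}$ for $ξ ∈ \T^n$, giving $\invol{g}(ξ) g(ξ) = \abs{g(ξ)}^2 ≥ 0$. Finiteness of the moments of $\invol{g} g μ_+$ follows at once from the assumption that $μ_+$ has finite moments of all orders together with the fact that $\invol{g} g$ lies in a finite-dimensional component of the filtration on $L$.

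For the support equality, the functions $g$ and $\invol{g} g$ share the same non-vanishing locus on $Ω$, since $\invol{g}(ξ) g(ξ) = 0$ if and only if $g(ξ) = 0$. \cref{lem:samevanishingonsupport} applied to the non-negative measure $μ_+$ then yields $\supp(g μ_+) = \supp(\invol{g} g μ_+)$, which is the same as $\supp μ = \supp(\invol{g} μ)$.

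With all three hypotheses of \cref{lem:idealequalkernelnonneg} in place for the pair $(g, μ)$, its conclusion gives precisely $\Id{\supp μ} ∩ F_d = \kernel H_{d+δ,d}$, which is the desired equality. The only point requiring care is the correct handling of the involution in the two cases; the substantive work has already been carried out in \cref{lem:idealequalkernelnonneg,lem:samevanishingonsupport}, so no new obstacle arises here.
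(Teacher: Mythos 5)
Your proposal is correct and follows essentially the same route as the paper: both proofs observe that $\invol{g} g$ and $g$ have the same non-vanishing locus on $\Omega$, invoke \cref{lem:samevanishingonsupport} (applied to $\mu_+$) to conclude $\supp(\invol{g} g \mu_+) = \supp(g\mu_+) = \supp\mu$, note that $\invol{g}\mu = \invol{g} g \mu_+$ is non-negative, and then apply \cref{lem:idealequalkernelnonneg}. You simply spell out the non-negativity verification in the two cases more explicitly than the paper does.
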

\begin{proof}
  As $\invol{g} g$ and $g$ have the same vanishing set on $Ω$,
  it follows from \ref{lem:samevanishingonsupport}
  that $\supp\lr{\invol{g} g μ_+} = \supp\lr{g μ_+} = \supp μ$.
  As $\invol{g} g μ_+ = \invol{g} μ$ is a non-negative measure on $Ω$,
  the result follows from \ref{lem:idealequalkernelnonneg}.
\end{proof}

\begin{remark}\label{rem:pronyposdim}
  Under the assumptions that $μ_+$ is the uniform measure on some unknown variety $V\subseteq Ω$,
  that $g$ is non-zero on a Zariski-dense subset of $V$
  and that sufficiently large integers $d,δ∈ℕ$ are known
  such that $g∈F_δ$ and $V$ is generated by polynomials in $F_d$,
  then \ref{cor:idealequalkernelpolynomial} gives rise to
  a scheme for recovering all the defining data of $μ$ from finitely many of its moments.
  In particular, this includes finitely-supported measures as a special case,
  for which the variety $V$ is zero-dimensional.
  Hence, this may be regarded as an extension of Prony's method
  to more general measures.

  In this setting, we have $V = \supp μ = \supp μ_+$.
  Thus, we obtain the variety from
  $V = \V{\mf{a} \cap F_d} = \V{\kernel H_{d+δ,d}}$,
  where $\mf{a}\coloneqq\Id{\supp μ}\subseteq L$ denotes the vanishing ideal.
  Knowing $V$, one can compute the moments of the uniform measure $μ_+$ on $V$.
  Finally, finding $g$ is a linear problem involving only the moments of $μ$ and $μ_+$.
  Indeed,
  if $B_δ \subseteq F_δ$ represents a basis of $\submodulequotient{F_δ}{\mf{a}}$
  and $H \coloneqq \lr{\int_Ω \invol{w} v \,\d μ_+}_{w,v∈B_δ}$ is the corresponding moment matrix,
  we have
  \[
    H \resid{g} = \lr{\int_Ω \invol{w} \resid{g} \,\d μ_+}_{w∈B_δ} = \lr{\int_Ω \invol{w} \d μ}_{w∈B_δ},
  \]
  where $\resid{g} = \sum_{v∈B_δ} g_v v$ is the reduction of $g$ modulo $\mf{a}\cap F_δ$.
  As $H$ is a positive-definite matrix by \ref{thm:idealequalkernel},
  this linear system has a unique solution,
  so the polynomial $g$ is unique modulo $\mf{a}\cap F_δ$.

  Though, we remark that computing the moments of the uniform measure $μ_+$
  can be a difficult problem itself
  if the variety $V$ is not zero-dimensional.
  An approach that proved successful for us is
  to find a parametrization of the variety $V$
  and then compute the moments numerically with respect to this parametrization.
\end{remark}

We give a few examples of signed measures that illustrate
that the assumption of non-negativity is crucial
for \ref{thm:idealequalkernel}.
\par
\begin{example}\label{eg:negdensity}
  Let $μ$ be a signed measure supported on the real interval $[-1,1]\subseteq ℝ$ with density $g(x)\coloneqq x$
  and denote its moment functional by $σ\colon R \coloneqq ℝ[x] \to ℝ$,
  so that $σ(p) = \int_{-1}^1 p(x) g(x) \d x$ for $p∈R$.
  In particular, this means that $\sform[σ]{\blank,\blank}$ is not positive-semidefinite.
  One checks that, due to symmetry, the even moments $σ(x^{2α}) = 0$ vanish for $α∈ℕ$
  and thus $\det\lr{σ\lr{x^{2α+2β}}}_{0≤α,β≤d} = 0$ for all $d∈ℕ$.
  Then it follows that $\det\lr{σ\lr{x^{α+β}}}_{0≤α,β≤d} = 0$ if $d$ is even,
  for example using the Leibniz formula
  or by a suitable permutation of rows and columns.

  This means that, for every even $d$,
  we find some non-zero polynomial in $R_{≤d}$
  that lies in the kernel of the square moment matrix $\lr{σ\lr{x^{α+β}}}_{0≤α,β≤d}$,
  even though the variety corresponding to the Zariski closure of the support of the signed measure $μ$
  is the entire line $ℝ$, which is defined by the zero-ideal in $R$,
  and despite the fact that the monomials are linearly independent modulo the zero-ideal.
  Hence, the statement of \ref{thm:idealequalkernel} cannot hold.
  However, note that, in this example, the non-truncated Hankel operator
  is injective nevertheless,
  as stated in \ref{cor:idealequalkernelnontruncated}.
  Moreover, as $g$ is a polynomial of degree $1$,
  it follows from \ref{cor:idealequalkernelpolynomial}
  that the kernel of the rectangular matrix
  $\lr{σ\lr{x^{α+β}}}_{0≤α≤d+1,0≤β≤d}$ is zero, for every $d∈ℕ$.
\end{example}

In the affine setting with $L_{≤d} = R_{≤d}$, $d∈ℕ$,
and for a finitely-supported signed measure,
it follows from \ref{lem:vandermondefactorization}
that the statement of
\ref{thm:idealequalkernelextended} holds with $d'\coloneqq d$,
as long as $d∈ℕ$ is sufficiently large.
The following example shows that this can fail for small $d$.
\par
\begin{example}\label{ex:pointskernelunequaltruncatedideal}
  Let $R = \K[x]$ be the univariate polynomial ring
  and let $\mf{a} = \maxideal{ξ_1}\cap\maxideal{ξ_2}$
  with two distinct points $ξ_1,ξ_2∈\K$.
  We consider the map $σ = \ev_{ξ_1} - \ev_{ξ_2}$.
  Denote by $H_{d',d}$ the corresponding Hankel matrix, for $d,d'∈ℕ$.
  By \myeqref{eq:idealsubsetkernel},
  we have $\mf{a}\cap R_{≤d} \subseteq \kernel H_{d',d}$,
  but equality does not hold for small $d$.

  For instance, if $d'=d=0$,
  we have
  \[
    \mf{a}\cap R_{≤d} = 0 \subsetneq \kernel H_{0,0} = \kernel \lr{0}.
  \]
  However, if $d$ is sufficiently large, namely $d ≥ 2$, and if $d'≥d$, we have
  $\mf{a}\cap R_{≤d} = \kernel H_{d',d}$
  by \ref{lem:vandermondefactorization:2},
  regardless of the choice of $d'$.
\end{example}
\par
In contrast, we have seen in \ref{ex:signedmeasurewrongkernel}
that a similar statement is not possible
for infinitely-supported signed measures.
More precisely, it is an example in which one has
$\mf{a}\cap R_{≤d} ≠ \kernel H_{d,d}$ for all $d∈ℕ$,
since $1∈\kernel H_{d,d}$, but $1 \notin \mf{a}$.
For a non-negative measure, this would not be possible due to \ref{thm:idealequalkernel}.

For signed measures that are a complex linear combination of non-negative measures,
we obtain the following statement,
which in contrast to \ref{thm:idealequalkernelextended}
bounds the size of the moment matrix
and does not require compactness of the support.
\par
\begin{theorem}\label{thm:idealequalkernel:mixture}
  Let $μ = \sum_{j=1}^r λ_j μ_j$, where $λ_j∈ℂ\withoutzero$ and
  $μ_j$ are non-negative measures on $Ω$ with finite moments.
  Assume that $δ∈ℕ$ such that there exist elements
  $h_j ∈ L_{≤δ}$, $1≤j≤r$, such that
  $h_j ≥ 0$ on $Ω$
  and
  \[\label{eq:lagrangelikecondition}
    \supp\lr{h_j μ_k} = \begin{cases}
      \supp μ_k & \text{if $k = j$},\\
      ∅ & \text{otherwise}.
    \end{cases}
  \]
  Then
  \[
    \Id{\supp μ} \cap L_{≤d} = \kernel H_{d+δ,d},
  \]
  holds for all $d∈ℕ$
  with $H_{d+δ,d} \coloneqq \lr{\sform[σ]{w,v}}_{w∈B_{d+δ}, v∈B_d}$,
  where $σ\colon L\to\K$ denotes the moment functional of $μ$
  and $B_d,B_{d+δ}$ denote bases of $L_{≤d},L_{≤d+δ}$, respectively.
\end{theorem}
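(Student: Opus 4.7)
The plan is to reduce the statement to \ref{thm:idealequalkernel}, applied separately to each of the non-negative auxiliary measures $h_j μ_j$, $1≤j≤r$. The easy inclusion $\Id{\supp μ}\cap L_{≤d}\subseteq\kernel H_{d+δ,d}$ is routine: if $p∈\Id{\supp μ}\cap L_{≤d}$, then $p μ=0$ by \ref{lem:zeroonsupport}, so $\sform[σ]{w,p}=\int\invol{w}p\,\d μ=0$ for every $w∈L$, in particular for $w∈B_{d+δ}$.

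For the reverse inclusion, fix $p∈\kernel H_{d+δ,d}$ and, for each $1≤j≤r$ and each $q∈L_{≤d}$, test $p$ against the element $h_j q∈L_{≤d+δ}$, which yields
\[
  0=\sform[σ]{h_j q,p}=\int_Ω\invol{h_j}\,\invol{q}\,p\,\d μ.
\]
In both the affine and trigonometric setups, the hypothesis $h_j≥0$ on $Ω$ forces $h_j$ to be real-valued on $Ω$, so $\invol{h_j}$ and $h_j$ coincide as functions on $Ω$. Combined with \myeqref{eq:lagrangelikecondition}, which implies $h_j μ_k=0$ as a measure for $k≠j$, and with $λ_j≠0$, this reduces to $\int_Ω\invol{q}\,p\,\d(h_j μ_j)=0$ for every $q∈L_{≤d}$. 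Now $h_j μ_j$ is a non-negative measure on $Ω$ with finite moments (since $\mu_j$ has finite moments and $h_j∈L_{≤δ}$) and support equal to $\supp μ_j$ by \myeqref{eq:lagrangelikecondition}, so \ref{thm:idealequalkernel} applied with $W=L_{≤d}$ yields $p∈\Id{\supp μ_j}\cap L_{≤d}$. Running this for every $j$ shows that $p$ vanishes on $\bigcup_{j=1}^r\supp μ_j\supseteq\supp μ$, which completes the reverse inclusion.

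The only nonroutine step I foresee is the extraction of the $j$-th summand from $\sform[σ]{h_j q,p}$. The argument rests on the interplay of the involution with the non-negativity of $h_j$, and on the observation that \myeqref{eq:lagrangelikecondition} is precisely strong enough to make cross terms vanish as measures (not merely in moments); this is what allows \ref{thm:idealequalkernel} to be applied to each $h_j μ_j$ directly, without any compactness or Weierstrass-type approximation on $μ$ itself, and hence gives the explicit degree bound $d'=d+δ$ advertised in the statement.
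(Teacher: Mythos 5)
Your proof is correct, and it takes a noticeably different route from the paper's. You handle each index $j$ separately: testing $p$ against $h_j q$ with $q \in L_{\le d}$, using that $\invol{h_j}$ and $h_j$ agree as functions on $\Omega$ (forced by $h_j \ge 0$), and observing that the cross terms $h_j\mu_k$ for $k\ne j$ vanish as measures, so that only $\lambda_j \int \invol{q}\,p\,\d(h_j\mu_j)$ survives. Dividing by $\lambda_j \ne 0$ puts $p$ in the kernel of the square moment matrix of the non-negative measure $h_j\mu_j$, and \ref{thm:idealequalkernel} (applied with $W = L_{\le d}$, noting $\supp(h_j\mu_j)=\supp\mu_j$) gives $p \in \Id{\supp\mu_j}$. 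Taking the intersection over $j$ shows $p$ vanishes on $\bigcup_j\supp\mu_j\supseteq\supp\mu$, and the easy inclusion closes the chain. The paper, by contrast, forms the single polynomial $g = \sum_j\lambda_j h_j\in L_{\le\delta}$, shows $\invol{g}\mu = \sum_j\abs{\lambda_j}^2 h_j\mu_+$ is a non-negative measure with $\supp(\invol{g}\mu)=\supp\mu$, and invokes the intermediate \ref{lem:idealequalkernelnonneg}. Both routes are of comparable length. Yours is perhaps more transparent in that it decouples the components and never needs to verify non-negativity or the support condition for a combined measure; it also reveals the (a priori stronger) conclusion that $p$ vanishes on $\bigcup_j\supp\mu_j$, which by the trivial inclusion must share its Zariski closure with $\supp\mu$. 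The paper's approach, on the other hand, reuses \ref{lem:idealequalkernelnonneg} as the common engine behind this theorem and \ref{cor:idealequalkernelpolynomial}, keeping the mechanism uniform.
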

\begin{proof}
  Since $h_j μ_k = 0$ for all $k≠j$,
  we have
  \[\label{eq:unweightedmeasure}
    h_j μ = h_j λ_j μ_j = h_j λ_j μ_+,
  \]
  where we define $μ_+ \coloneqq \sum_{k=1}^r μ_k$.
  Letting $g \coloneqq \sum_{j=1}^r λ_j h_j ∈ L_{≤δ}$,
  we thus have
  $\invol{g} μ = \sum_{j=1}^r \abs{λ_j}^2 h_j μ_+$,
  which is a non-negative measure.
  Its support satisfies
  \[
    \supp\lr{\sum_{j=1}^r \abs{λ_j}^2 h_j μ_+}
    = \bigcup_{j=1}^r \supp\lr{h_j μ}
    = \supp μ,
  \]
  where the first equality holds due to \myeqref{eq:unweightedmeasure}
  and the second due to $\supp μ_j = \supp\lr{h_j μ_j}$.
  Hence, the statement follows from \ref{lem:idealequalkernelnonneg}.
\end{proof}

\begin{remark}\label{rem:lagrangecondition}
  Note that elements $h_j∈L_{≤δ}$ satisfying \myeqref{eq:lagrangelikecondition} exist,
  as long as $δ∈ℕ$ is large enough and
  the Zariski closures of $\supp μ_j$, $1≤j≤r$,
  are varieties such that each pair of them does not share a common irreducible component.
  This allows for elements $f_j∈L$
  such that $f_j$ vanishes on $\supp μ_k$ for all $1≤k≤r$ with $k≠j$
  and $f_j$ is non-zero on a dense subset of $\supp μ_j$,
  so we can choose $h_j \coloneqq \invol{f_j} f_j$, for $1≤j≤r$.

  In particular, we can apply \ref{thm:idealequalkernel:mixture}
  to \ref{ex:signedmeasurewrongkernel}
  with $f_1 \coloneqq x_1^2 - x_2$, $f_2 \coloneqq x_1 - x_2^2$.
  With $δ\coloneqq 2$, we then have $h_1,h_2 ∈ L_{≤δ}$
  in terms of the max-degree filtration
  and the hypotheses of the \lcnamecref{thm:idealequalkernel:mixture} are satisfied.
  The Laurent polynomial $g = h_1 - h_2$ constructed in the proof
  of the \lcnamecref{thm:idealequalkernel:mixture}
  is non-negative on one of the components
  and non-positive on the other, as depicted in \ref{fig:trigonometriclines},
  so that $\invol{g} μ$ is a non-negative measure.
\end{remark}

\minisec{Acknowledgments}

The author thanks Stefan Kunis for helpful comments and for discussions and support.
Parts of this manuscript are incorporated in the thesis \cite{wageringel2021}.

\setlength{\emergencystretch}{1em}%
\setcounter{biburlnumpenalty}{5000}  %
\printbibliography{}
\end{document}